\documentclass[11pt]{article}
\usepackage{graphicx}
\usepackage{tikz}
\usepackage{amssymb}
\usepackage{amsmath,amsthm}
\usepackage{verbatim}
\usepackage{amsfonts}
\usepackage[T1]{fontenc}
\usepackage{multicol}
\usepackage{color}

\textwidth = 6.5 in

\textheight = 9 in
\oddsidemargin = 0.0 in
\evensidemargin = 0.0 in
\topmargin = 0.0 in
\headheight = 0.0 in
\headsep = 0.0 in
\parskip = 0.2in
\parindent = 0.3in

\newtheorem{thm}{Theorem}
\newtheorem{prop}{Proposition}[section]

\newtheorem{lem}[thm]{Lemma}
\newtheorem{conj}[thm]{Conjecture}

\newtheorem{rem}[prop]{Remark}

\newcommand{\List}{\mathcal{L}}

\title{The List Distinguishing Number of Kneser Graphs}
\author{Niranjan Balachandran\footnote{niranj (at) math.iitb.ac.in, Supported by grant 12IRCCSG016, IRCC, IIT Bombay} and Sajith Padinhatteeri\footnote{sajith(at)math.iitb.ac.in, Supported by grant 09/087(0674)/2011-EMR-I, Council of Scientific \& Industrial Research, India}, \\ Department of Mathematics,\\
Indian Institute of Technology Bombay,\\ Mumbai, India. }

\begin{document}
\maketitle
\begin{abstract} A graph $G$ is said to be $k$-distinguishable if the vertex set can be colored using $k$ colors such that no non-trivial automorphism fixes every color class, and the distinguishing number $D(G)$ is the least integer $k$ for which $G$ is $k$-distinguishable. If for each $v\in V(G)$ we have a list $L(v)$ of colors, and we stipulate that the color assigned to vertex $v$ comes from its list $L(v)$ then $G$ is said to be $\List$-distinguishable where $\List =\{L(v)\}_{v\in V(G)}$. The list distinguishing number of a graph, denoted $D_l(G)$, is the minimum integer $k$ such that every collection of lists $\List$ with $|L(v)|=k$ admits an $\List$-distinguishing coloring. In this paper, we prove that $D_l(G)=D(G)$ when $G$ is a Kneser graph.\end{abstract}

\textbf{Keywords:}
List Distinguishing Number, Distinguishing number, Kneser graphs  \\

2010 AMS Classification Code: 05C15, 05C25, 05C76, 05C80.

\section{Introduction}
Let $G$ be a graph and let $Aut(G)$ denote the full automorphism group of $G$. By  an $r-$vertex labelling of $G$, we shall mean a map $f:V(G) \rightarrow \{1,2, \dots, r\}$, and the sets $f^{-1}(i)$ for $i\in\{1,2\ldots,r\}$ shall be referred to as the color classes of the labelling $f$. An automorphism $\sigma \in Aut(G)$ is said to fix a color class $C$ of $f$ if $\sigma(C) = C$, where $\sigma(C) = \{\sigma(v):v \in C\}$. Albertson and Collins \cite{AK} defined the distinguishing number of graph $G$, denoted $D(G)$, as the minimum $r$ such that $G$ admits an $r-$ vertex labelling with the  property that no non-trivial automorphism of $G$ fixes every color class. 
%Several variants of distinguishing number have been discussed in the literature\cite{BP,CT,IRL,MF}.
 
An interesting variant of the distinguishing number of a graph, due to Ferrara, Flesch, and Gethner \cite{MF} goes as follows. Given an assignment $\List=\{ L(v) \}_{v \in V(G)}$ of lists of available colors to vertices of $G$, we say that $G$ is $\List-$distinguishable if there is a distinguishing coloring $f$ of $G$ such that $f(v) \in L(v)$ for all $v$. The list distinguishing number of $G$, $D_l(G)$ is the minimum integer $k$ such that $G$ is $L-$distinguishable for any list assignment $L$ with $|L(v)|=k$ for all $v$. The list distinguishing number has generated a bit of interest recently (see \cite{MF, tree, interval} for some relevant results) primarily due to the following conjecture that appears in \cite{MF}: For any graph $G$, $D_l(G)=D(G)$. The paper \cite{MF}, in which this notion was introduced and the conjecture was made, proves the same for cycles of size at least $6$, cartesian products of cycle, and for graphs whose automorphism group is the Dihedral group. The paper \cite{tree} proves the validity of the conjecture for trees, and \cite{interval} establishes it for interval graphs. 
%write what results were proved in each of those 3 papers

Let $r\ge 2$, and $n\ge 2r+1$. The  Kneser graph $K(n,r)$ is defined as follows: The vertex set of $K(n,r)$ consists of all $k-$element subsets of $[n]$; vertices $u,v$ in $K(n,r)$ are adjacent if and only if $u\cap v=\emptyset$. The Distinguishing number of the Kneser graphs is well known (see \cite{AB}): $D(K(n,r))=2$  when $n \neq 2r+1$ and $r\ge 3$; for $r=2$, $D(K(5,2))=3$, and $D(K(n,2))=2$ for all $n\ge 6$. 

%The proof of $D(K(n,2))=2$ is due to L\'{a}szl\'{o} Lov\'{a}sz\cite{AK} and the main idea is to consider $K(n,2)$ as $(L(k_n))^c$, compliment of line graph of the complete graph on $n$ vertices. Lov\'{a}sz,in a personal communication with the authors of \cite{AK}, has shown that by assigning first color to an asymmetric spanning subgraph, say the edges of a Hamiltonian path with an additional edge connecting second and fourth vertex of $K_n$, and by avoiding that color from any other edge gives a $2-$distinguishing coloring of $L(K(n,2))$ . Since a graph and its compliment have the same distinguishing number, we have $D(K(n,2)) =2$.       
Our main theorem in this short paper establishes the validity of the list distinguishing conjecture for the family of Kneser graphs. 
\begin{thm} $D_l(K(n,r))=D(K(n,r))$ for all $r\ge 2, n\ge 2r+1$.\end{thm}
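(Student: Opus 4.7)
The inequality $D_l(K(n,r))\ge D(K(n,r))$ is immediate from the case of constant lists, so the content of the theorem is the reverse inequality. Set $k:=D(K(n,r))$ (which equals $3$ only for $(n,r)=(5,2)$, and is $2$ otherwise) and suppose $\List=\{L(v)\}_{v}$ is an arbitrary list assignment with $|L(v)|=k$ for every $v$. Since $Aut(K(n,r))=S_n$ for $n\ge 2r+1$, acting on $V(K(n,r))$ via the natural action on $r$-subsets of $[n]$, the problem reduces to exhibiting $f$ with $f(v)\in L(v)$ such that no $\sigma\in S_n\setminus\{e\}$ is constant on every orbit of its action on $V(K(n,r))$.

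The plan is a random colouring together with a union bound, supplemented by a structural dichotomy on $\List$ for the small-parameter cases. Choose $f(v)\in L(v)$ independently and uniformly; for $\sigma\in S_n$ with orbit decomposition $O_1,\ldots,O_{c(\sigma)}$ on $V=V(K(n,r))$,
$$\Pr\!\left[\sigma\text{ preserves }f\right]\;=\;\prod_{j=1}^{c(\sigma)}\frac{\bigl|\bigcap_{v\in O_j}L(v)\bigr|}{k^{|O_j|}}\;\le\; k^{-d(\sigma)},\quad\text{where } d(\sigma):=\binom{n}{r}-c(\sigma).$$
It therefore suffices to show $\sum_{\sigma\neq e}k^{-d(\sigma)}<1$. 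A first estimate: for every non-identity $\sigma$, pick $i$ with $j:=\sigma(i)\neq i$; every $r$-subset containing exactly one of $i,j$ is moved by $\sigma$, so $d(\sigma)\ge\binom{n-2}{r-1}$. Stratifying $S_n\setminus\{e\}$ by the support size $m$ and refining the estimate to something like $d(\sigma)\gtrsim m\binom{n-2}{r-1}/r$ (by summing the observation above over all $i\in\mathrm{supp}(\sigma)$ and correcting for the over-count) makes the union bound succeed once $\binom{n-2}{r-1}$ is large compared to $n\log n$, which handles $r\ge 3$ and all but finitely many $n$.

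The principal difficulty is the residual small-parameter regime — primarily $r=2$ with small $n$, and a handful of cases around $n=2r+1$ — where the large number of transpositions and double-transpositions, together with their small $d(\sigma)$, overwhelms the crude union bound. For these, I would run the following dichotomy on $\List$. Either (i) a large fraction of vertices share a common list $\{a,b\}$, in which case the Albertson--Boutin $2$-distinguishing construction \cite{AB} applied to this uniform part (using only the colours $a,b$, the remaining vertices coloured arbitrarily from their lists) already distinguishes $K(n,r)$; or (ii) the lists genuinely vary, in which case for each small-support $\sigma$ many orbits $O$ satisfy $|\bigcap_{v\in O}L(v)|\le 1$, improving the per-orbit factor in the probabilistic bound from $k^{1-|O|}$ to $k^{-|O|}$ and bringing the union bound back below $1$. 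The main obstacle, and where the bulk of the technical work lies, is calibrating this dichotomy \emph{uniformly} over all small-support $\sigma$ at once: different permutations may be broken by different parts of the list-variability structure, and simultaneously accommodating all of them is the delicate step.
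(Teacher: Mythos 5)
Your first-moment framework is the same as the paper's: color each vertex uniformly and independently from its list, bound the probability that a fixed non-identity $\sigma\in S_n=Aut(K(n,r))$ preserves all color classes by $k^{-d(\sigma)}$, and union-bound over $S_n$. Your estimate $d(\sigma)\ge\binom{n-2}{r-1}$ (via an element $i$ with $j=\sigma(i)\neq i$) is exactly the paper's key lemma for $r\ge 3$, and that part of your argument is sound for $r\ge3$ and $n\ge 9$ (the paper also needs a small refinement, splitting permutations by whether all cycles have length $\le 2$, to get $n=7,8$). The genuine gap is the $r=2$ regime, which is where essentially all of the paper's work lies. First, your refined estimate ``$d(\sigma)\gtrsim m\binom{n-2}{r-1}/r$'' is not correct as stated for $r=2$: for a fixed-point-free involution the edge orbits all have size $2$ and one gets $d(\sigma)\approx m(n-2)/4$, not $m(n-2)/2$; and even with the correct constant, stratifying only by support size leaves the union bound above $1$ for a substantial range of $n$ (the paper needs a much finer recursion over full cycle types, with the quantities $R_{\lambda_1}(\Lambda)$ and an induction proving $f(n)<1$, just to reach $n\ge 8$). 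In particular for $r=2$, $n=6,7$ the first-moment method provably cannot succeed with identical lists, which is why the paper abandons it there and gives explicit list colorings built around a maximal monochromatic path.

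Second, your proposed dichotomy for the residual cases is not a proof, and part (i) is unsound as stated: if only ``a large fraction'' of vertices share the list $\{a,b\}$ and the remaining vertices are colored arbitrarily from their own lists, those remaining vertices may also receive the colors $a$ or $b$, so the color class of $b$ is no longer the Albertson--Boutin set and their distinguishing argument does not transfer; one would have to control how the extra vertices interact with every potential symmetry, which is precisely the difficulty that makes list distinguishing nontrivial. Part (ii) together with the requirement that the list-variability break \emph{all} small-support permutations simultaneously is, as you yourself note, left uncalibrated --- but that is the entire content of the hard cases. So the proposal captures the paper's easy half ($r\ge3$) but does not contain a complete argument for $r=2$: the paper settles $n\ge 8$ by the cycle-type recursion and induction, and $n=6,7$ by bespoke constructive colorings (with $n=5$, where $D=3$, handled by a separate short computation).
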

Before we proceed to the proof of the theorem, we describe the main idea of the proof. We choose randomly (uniformly) and independently for each vertex $v$, a color from its list $L(v)$, and we calculate/bound the expected number of non-trivial automorphisms that fix every color class for this random set of choices. This line of argument features in some other related contexts, for e.g., \cite{BP,contuck,IRL,RS} most notably under the umbrella of what is called the `Motion Lemma', and some of its variants. These methods however do not work  in the cases  $r=2$ and $n=6$ or $n=7$, so we settle these cases by different arguments. As it turns out, the case with $r\ge 3$ is much simpler in contrast to the case $r=2$.

The rest of the paper is organized as follows. In the next couple of sections, we detail the proof of the conjecture for $r=2$ with $n\ge 8$, and the case for $r\ge 3$ respectively. The cases $r=2, n=6,7$  are dealt with in the appendix. We conclude with a few remarks.
\section{List-distinguishing number of $K(n,2)$}\label{r=2}
%We fix some notation that we shall use in this paper. 
%\begin{notation}\label{notation}
Firstly, recall the following
\begin{thm} $D(K(n,2)) = 2$ for $n\ge 6$, and $D(K(5,2))=3$.\end{thm}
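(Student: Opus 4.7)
The plan is to exploit the well-known identification $\mathrm{Aut}(K(n,2))\cong S_n$ for $n\ge 5$, where $S_n$ acts on the vertex set $\binom{[n]}{2}$ via its natural action on $2$-subsets of $[n]$. Under this identification a $2$-coloring of $V(K(n,2))$ is literally a $2$-edge-coloring of $K_n$, and it is distinguishing iff the only $\sigma\in S_n$ preserving the red/blue partition is the identity; equivalently, iff the red subgraph of $K_n$ has trivial automorphism group. The question of whether $D(K(n,2))=2$ thus reduces to asking whether an asymmetric graph on $n$ vertices exists.

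For $n\ge 6$ such graphs are well known to exist (by the classical theorem of Erd\H{o}s--R\'enyi, or by direct construction for the small cases $n\in\{6,7\}$), giving $D(K(n,2))\le 2$; the matching lower bound is immediate since $K(n,2)$ has non-trivial automorphisms.

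For $n=5$ I would handle the upper bound $D(K(5,2))\le 3$ by exhibiting an explicit distinguishing $3$-coloring: assign color $1$ to $\{1,2\}$ and $\{1,3\}$, color $2$ to $\{2,4\}$, and color $3$ to the remaining seven vertices. A preserving $\sigma\in S_5$ must fix $1$, the unique common element of the two color-$1$ vertices; if $\sigma$ additionally swaps those vertices it sends $2\leftrightarrow 3$, but then $\sigma(\{2,4\})$ contains $3\notin\{2,4\}$, contradicting preservation of color $2$. So $\sigma$ fixes $1,2,3$, and preserving $\{2,4\}$ then forces $\sigma=\mathrm{id}$. The main (minor) obstacle will be the matching lower bound $D(K(5,2))\ge 3$, which amounts to showing that no graph on $5$ vertices is asymmetric. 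This is a small finite check: a quick inspection of the $34$ unlabelled graphs on $5$ vertices confirms that every one admits a non-trivial automorphism, so no $2$-coloring of $K(5,2)$ can be distinguishing, forcing $D(K(5,2))=3$.
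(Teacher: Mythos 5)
Your argument is correct, but it is worth noting that the paper does not prove this statement at all: it simply recalls it as a known result, citing Albertson and Boutin \cite{AB}, who obtain $D(K(n,r))$ for all Kneser graphs via the machinery of determining sets. Your route is a self-contained, elementary proof specific to $r=2$: you use $\mathrm{Aut}(K(n,2))\cong S_n$ (which, as the paper also notes, follows from the Erd\H{o}s--Ko--Rado theorem for $n\ge 5$) to translate a $2$-coloring of $V(K(n,2))$ into a graph on $[n]$, so that $D(K(n,2))=2$ is equivalent to the existence of an asymmetric graph on $n$ vertices; this exists iff $n\ge 6$, and your explicit $3$-coloring together with the finite check that every graph on $5$ vertices has a nontrivial automorphism settles $D(K(5,2))=3$. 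The only presentational quibble is the appeal to the Erd\H{o}s--R\'enyi theorem, which is asymptotic; for the claim ``asymmetric graphs exist for every $n\ge 6$'' it is cleaner to give a uniform construction (e.g.\ a known $6$-vertex asymmetric graph, and for $n\ge 7$ a spider whose legs have pairwise distinct lengths), but this is easily repaired and not a gap. What your approach buys is a short, hands-on proof tied to the edge-coloring picture of $K_n$ that the paper itself exploits in Section 2 and the appendix; what the citation to \cite{AB} buys the authors is coverage of all $r\ge 2$ at once within a more general framework.
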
 
Since $n\ge 2r+1$, it follows from the Erd\H{o}s-Ko-Rado theorem that the full automorphism group of $K(n,r)$ is precisely $S_n$, the symmetry group on $n$ symbols. 

Suppose $n\ge 6$ and suppose $\{L(v)\}_{v \in V(K(n,2))}$ is a collection of color lists of size $2$ corresponding to the vertices of $K(n,2)$. It is simpler to think of these as color lists on the edges of $K_n$.  In other words, let $\{L(e)\}_{e \in E(K_n)}$ be lists of colors of size $2$ for the edges of $K_n$.  For each edge of $K_n$ we choose a color uniformly and independently at random from its given list of colors.  As mentioned in the introduction, we seek to compute the expected number of non-trivial automorphisms that fix all the colors class of this random coloring. 
 
 Firstly we set up some notation.
\begin{itemize}
\item[a.] If the disjoint cycle decomposition of a permutation $\sigma\in S_n$ consists of  $l_i$ cycles of length $\lambda_i,$ for  $i=1,2,\ldots,t$ with $\lambda_1<\lambda_2<\cdots<\lambda_t$, then we say $\sigma$ is of type $\Lambda$ where $\Lambda := ( \lambda_1^{l_1}, \lambda_2^{l_2}, \dots, \lambda_t^{l_t})$. Note that $\sum_i l_i\lambda_i = n$.
\item[b.] $CT^{(n)}$ shall denote the set of all permutation types of permutations in $S_n$, i.e., $$CT^{(n)}:=\{ ( \lambda_1^{l_1}, \lambda_2^{l_2}, \dots, \lambda_t^{l_t})\textrm{\ with\ }\sum_i l_i\lambda_i = n\textrm{\ and\ }\lambda_1<\lambda_2<\cdots<\lambda_t\}.$$   
%\begin{enumerate}
\item[c.] $CT^{(n)}_{\ge r}$, $CT^{(n)}_{r}$ shall denote the sets of all cycle types with minimum cycle length  at least $r$, and with minimum cycle length exactly $r$, respectively.
%\item  to denote the set of all cycle types in $CT^{(n)}$ with  
%\item $CT_{n>r}$ to denote the set of cycle types in $CT^{(n)}$ with minimum cycle length strictly greater than $r.$\end{enumerate}%We often use $\Lambda$ to denote a general element of $CT^{(n)}.$
%\item[c.] For $\Lambda\in CT^{(n)}$, we shall shortly describe an upper bound $P(\Lambda)$ on the probability that a nontrivial permutation of type $\Lambda$ fixes all the color classes set wise in the random coloring. 
%\item[d.] We also introduce the functions \begin{align*} f(n)&:= \displaystyle\sum_{\Lambda \in CT^{(n)}} P(\sigma),\\ f_{\ge r}(n)&:=\sum_{\Lambda \in CT^{(n)}_{\ge r}} P(\Lambda) \end{align*} that will be relevant in our calculations.
\item[d.]For positive integers $a,b$, we shall denote by $(a,b)$, the g.c.d. of $a$ and $b$.
\end{itemize}
%\end{notation} 
We start with two simple observations. Firstly, note that if a nontrivial automorphism $\sigma$ fixes each of the color classes (as sets) in the random coloring of  $E(K_n),$ then for each $e \in E(K_n)$ all the edges in the orbit of $e$ under the action of $\sigma$ get the same color. Also, the probability that $\sigma$ preserves every color class depends only on the cycle type of $\sigma$.

For a nontrivial $\sigma\in S_n$, we first obtain an upper bound $P(\sigma)$ on the probability that $\sigma$ fixes all the color classes (as sets) in the random coloring. We set $P(\Lambda) := \displaystyle\sum_{\sigma\textrm{\ of\ type\ }\Lambda} P(\sigma)$, so this gives an upper bound on $P(\Lambda)$ as well.

 \begin{lem}\label{lemcore}
Let $\sigma\in S_n$ be a nontrivial permutation of type $\Lambda = ( \lambda_1^{l_1}, \lambda_2^{l_2}, \dots, \lambda_t^{l_t})$.  Let $g(\lambda_i) :=
\left \lfloor \frac{(\lambda_i -1)^2}{2}\right \rfloor$ and $g(\lambda_i, \lambda_j ):= \lambda_i \lambda_j - ( \lambda_1, \lambda_j)$. Furthermore, for $i\le j$ write $l_j^* := l_i(l_i-1) /2$ when $i=j$ and $l_j^* = l_il_j$ for all $j>i$. Then
 $$P(\sigma) = \frac{1}{2^{\mu}},$$ where $$\mu = \sum\limits_{i=1}^t \left(g(\lambda_i)l_i +\sum\limits_{j\ge i}^t g(\lambda_i, \lambda_j )l_j^* \right).$$ Consequently, for $\Lambda \in CT^{(n)},$
 $$P(\Lambda) \le n!2^{-\mu} \prod_{i=1}^t \frac{\lambda_i^{-l_i}}{(l_i)!}.$$  
\end{lem}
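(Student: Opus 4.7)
The plan is to translate the question into an edge-coloring problem on $K_n$ and then reduce everything to an orbit count. Identifying $V(K(n,2))$ with $E(K_n)$, the permutation $\sigma$ acts on $E(K_n)$, and $\sigma$ preserves every color class if and only if every orbit of $\sigma$ on $E(K_n)$ is monochromatic. Since colors are chosen independently and each list has size $2$, for an orbit $O$ of size $k$ one has
$$\Pr[O\text{ is monochromatic}]=\sum_{c}\prod_{e\in O}\Pr[c(e)=c]\le 2\cdot(1/2)^k=2^{1-k},$$
the factor $2$ accounting for the at most two colors common to every list $\{L(e):e\in O\}$. By independence across (disjoint) orbits, $P(\sigma)\le 2^{-\mu}$ where $\mu=\sum_O(|O|-1)=\binom{n}{2}-\#\{\text{orbits of }\sigma\text{ on }E(K_n)\}$, so the entire lemma reduces to computing this orbit deficit.

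To evaluate $\mu$ I would partition the edges of $K_n$ into those whose endpoints lie in the same cycle of $\sigma$ and those whose endpoints straddle two distinct cycles. Inside a single $\lambda$-cycle, the shift sorts edges by their cyclic ``gap'' $d\in\{1,\ldots,\lfloor\lambda/2\rfloor\}$; a short case check shows that each $d<\lambda/2$ contributes a single orbit of size $\lambda$, while $d=\lambda/2$ (possible only when $\lambda$ is even) contributes one extra orbit of size $\lambda/2$, and in both parities $\sum_O(|O|-1)$ collapses to $\lfloor(\lambda-1)^2/2\rfloor=g(\lambda)$. Summed over all $l_i$ cycles of length $\lambda_i$ this contributes $\sum_i g(\lambda_i)l_i$. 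For an edge joining a $\lambda_i$-cycle to a distinct $\lambda_j$-cycle, iterating $\sigma$ shows the orbit has length $\mathrm{lcm}(\lambda_i,\lambda_j)$, so the $\lambda_i\lambda_j$ such edges split into $(\lambda_i,\lambda_j)$ orbits contributing $\lambda_i\lambda_j-(\lambda_i,\lambda_j)=g(\lambda_i,\lambda_j)$ each. Counting unordered pairs of cycles ($\binom{l_i}{2}$ when both cycles have common length $\lambda_i$, and $l_il_j$ when $i<j$, which are precisely the $l_j^*$ of the statement) assembles $\mu$ into exactly the claimed expression.

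The final inequality follows at once: since $P(\sigma)=2^{-\mu}$ depends only on the cycle type of $\sigma$, and since the number of permutations in $S_n$ of type $\Lambda$ is the classical $n!/\prod_i(\lambda_i^{l_i}\,l_i!)$, summing $P(\sigma)$ over all such $\sigma$ yields the displayed bound on $P(\Lambda)$. The whole proof is essentially mechanical orbit bookkeeping; the only step requiring mild care is the even-$\lambda$ ``diameter'' orbit of size $\lambda/2$ inside a single cycle, which is exactly what forces the floor in the definition of $g(\lambda)$.
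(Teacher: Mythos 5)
Your proposal is correct and follows essentially the same route as the paper: partition the edges of $K_n$ into $\sigma$-orbits (within-cycle "gap" classes giving $g(\lambda_i)$ per cycle, and cross-cycle orbits of size $\mathrm{lcm}(\lambda_i,\lambda_j)$ giving $g(\lambda_i,\lambda_j)$ per pair of cycles), bound each orbit's monochromaticity probability by $2^{1-|O|}$ using independence, and multiply by the standard count $n!\prod_i\lambda_i^{-l_i}/(l_i)!$ of permutations of type $\Lambda$. Your orbit-deficit bookkeeping, including the even-$\lambda$ orbit of size $\lambda/2$ that produces the floor in $g(\lambda)$, matches the paper's computation, and your reading of $P(\sigma)$ as the upper bound $2^{-\mu}$ is consistent with how the paper defines it.
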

 \begin{proof}
If $\sigma$ is an automorphism that fixes every color class then as observed earlier, for each edge $e$, every edge in $\{e, \sigma(e), \sigma^2(e), \dots, \sigma^k(e) \}$ has the same color. Here, the integer $k\ge 1$ is the smallest integer satisfying $\sigma^{k+1}(e) = e.$ Let $\sigma=C_1C_2\cdots C_u$ be its disjoint cycle decomposition. Writing $C=(12\cdots r)$, the prior observation implies that for each $1\le i\le \left \lfloor r/2\right \rfloor$, the set of edges $\{(1,i), (2,i+1),\ldots, (r,i+r-1)\}$ is monochrome, where the addition is performed modulo $r$. Moreover, as these form a pairwise disjoint partition of the edges of the clique on $C$, the probability that all these sets of edges are monochrome is at most $2^{-g(r)}$ where $g(r) = \left \lfloor\frac{(r -1)^2}{2}\right \rfloor$ as in the statement of the lemma.
 
Now, by a similar argument, if $\sigma$ fixes every color class then the set of edges between the vertices of two disjoint cycles $C_i$ and $C_j$ of size $r, s$ respectively is partitioned into monochrome sets of size equal to the least common multiple of  $r,s$. Hence the probability that such an event occurs is $2^{-{g(r,s)}}$ with $g(r,s)=rs-(r,s)$ as in the statement of the lemma. Moreover, these events (i.e., partitioning of the edges within each cycle $C_i$ and also across a pair of cycles $C_i,C_j$) are pairwise independent, and since $\sigma$ is of type $\Lambda$, it follows that the probability that $\sigma$ fixes every color class is at most $2^{-\mu}$, where $\mu$ is as described in the statement of the lemma.
  
As for $P(\Lambda),$ we use the first part of the lemma in conjunction with the observation that there are $n!\prod_{i=i}^t \frac{\lambda_i^{-l_i}}{(l_i)!}$ permutations of type $\Lambda$ in $S_n.$ 
 \end{proof}
 
 \vspace{0.2cm}
  \begin{rem}\label{probzero}
 If $\sigma \in S_n$ is of  type $\Lambda$, then the bound in the preceding lemma occurs if and only if all the lists are identical. If in fact, for some $i$, the lists for the edges $\{(1,i), (2,i+1),\ldots, (r,i+r-1)\}$ has empty intersection, then $P(\sigma)$ is zero. A similar remark about the lists of edges between the vertices of disjoint cycles $C_i,C_j$ also holds.
 \end{rem}
 If $\Lambda, \Gamma$ are cycle types in $CT^{(n)}$ and $CT^{(n-\lambda_1)}$ respectively, we say that $\Lambda$ `extends' $\Gamma$ if $$\Lambda = ( \lambda_1^{l_1}, \lambda_2^{l_2}, \dots, \lambda_t^{l_t})\textrm{\ and\ } \Gamma = ( \lambda_1^{l_1-1}, \lambda_2^{l_2}, \dots, \lambda_t^{l_t}).$$
 Note that $$P(\Lambda) = R_{\lambda_1}(\Lambda) P( \Gamma)$$ where $$R_{\lambda_1}(\Lambda) = \frac{n(n-1)(n-2) \dots (n-\lambda_1+1)}{\lambda_1 l_1 }2^{-g(\lambda_1) - g(\lambda_1, \lambda_1 )(l_1-1) - \sum\limits_{j\ge 2}^t g(\lambda_1, \lambda_j )l_j}.$$ This is a straightforward consequence of lemma \ref{lemcore}.

\begin{lem}\label{r<1}
Let $\Lambda = ( \lambda_1^{l_1}, \lambda_2^{l_2}, \dots, \lambda_t^{l_t})$ be a cycle type in $CT^{(n)}$. Then for $n\ge 9$
\begin{eqnarray} R_{\lambda_1}(\Lambda) &<&2^{-n \lambda_1/7}\textrm{\ if\ }\lambda_1\ge 2,\\
                           R_{1}(\Lambda) &\le &\frac{n}{2(n-2)}\textrm{\ and\ equality\ is\ achieved\ precisely\ if\ }\Lambda=(1^{n-2},2),\\
                           R_{1}(\Lambda) &\le &\frac{n}{4(n-3)}\textrm{\ if\ }\Lambda\neq(1^{n-2},2).\end{eqnarray}\end{lem}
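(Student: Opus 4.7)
The lemma should follow from direct computation: substitute the explicit formulas $g(\lambda)=\lfloor(\lambda-1)^2/2\rfloor$ and $g(\lambda_i,\lambda_j)=\lambda_i\lambda_j-(\lambda_i,\lambda_j)$ into the expression for $R_{\lambda_1}(\Lambda)$ derived just before the statement, and estimate carefully. My plan is to split into the cases $\lambda_1\ge 2$ and $\lambda_1=1$.

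For part~(1), with $\lambda_1\ge 2$, I would crudely bound the fraction $n(n-1)\cdots(n-\lambda_1+1)/(\lambda_1 l_1)$ above by $n^{\lambda_1}$ and extract a strong lower bound on the exponent of $2$ in the denominator. Using $g(\lambda_1,\lambda_1)=\lambda_1(\lambda_1-1)$ and, for $j\ge 2$, $g(\lambda_1,\lambda_j)\ge\lambda_1(\lambda_j-1)$ (since $(\lambda_1,\lambda_j)\le\lambda_1$), and writing $s=\sum_{j\ge 2}l_j$ so that $\sum_{j\ge 2}\lambda_j l_j=n-\lambda_1 l_1$, one obtains
\[
g(\lambda_1)+g(\lambda_1,\lambda_1)(l_1-1)+\sum_{j\ge 2}g(\lambda_1,\lambda_j)\,l_j\;\ge\;g(\lambda_1)+\lambda_1\bigl(n-\lambda_1-l_1-s+1\bigr).
\]
Using the trivial bounds $l_1\le n/\lambda_1$ and $s\le(n-\lambda_1 l_1)/2$, the right-hand side is of the form $n\lambda_1-O(\lambda_1)$, which for $n\ge 9$ exceeds $\lambda_1\log_2 n + n\lambda_1/7$ with comfortable slack, yielding (1).

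For parts~(2) and (3), with $\lambda_1=1$, the identities $g(1)=g(1,1)=0$ and $g(1,\lambda_j)=\lambda_j-1$ collapse the formula to
\[
R_1(\Lambda)=\frac{n}{l_1}\cdot 2^{-\nu},\qquad \nu:=(n-l_1)-s,
\]
where $s=\sum_{j\ge 2}l_j$. Because $\lambda_j\ge 2$ for $j\ge 2$, we have $2s\le n-l_1$, hence $\nu\ge s\ge 1$ for any non-identity $\Lambda$. The case $\nu=1$ forces $s=1$ and the unique non-trivial cycle to have length~$2$, i.e.\ $\Lambda=(1^{n-2},2)$, yielding $l_1=n-2$ and $R_1=n/(2(n-2))$; this settles~(2). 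Otherwise $\nu\ge 2$, and with $l_1=n-\nu-s$ subject to $1\le s\le\nu$, a short stratified case analysis locates the extremum of $R_1=n/((n-\nu-s)\,2^{\nu})$ and verifies the bound in~(3).

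The hardest part will be the estimation in~(1) in the delicate sub-case $\lambda_1=2$, where $g(\lambda_1)=0$ and so all of the exponent decay has to come from the cross terms; ensuring that the $n\lambda_1/7$ slack holds uniformly over all admissible $(l_1,s)$ with $n\ge 9$ is the tight arithmetic that should pin down the specific constant $1/7$ in the statement.
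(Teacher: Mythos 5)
Your reduction of the exponent to $g(\lambda_1)+\lambda_1(n-\lambda_1+1-l_1-s)$ is correct and matches the bookkeeping in the text, but the step where you discard the factor $\frac{1}{\lambda_1 l_1}$ and bound the prefactor by $n^{\lambda_1}$ is a genuine gap, and it bites exactly in the sub-case you yourself flag as delicate. When $\lambda_1=2$ and (almost) all cycles are $2$-cycles, $l_1+s$ can be as large as $n/2$, so your lower bound on the exponent is only about $n-2$, not ``$n\lambda_1-O(\lambda_1)$'' (the subtracted term $\lambda_1 l_1$ is of order $n$, not of order $\lambda_1$). The inequality your plan then requires, $\tfrac{5n}{14}-1>\log_2 n$, is false for $9\le n\le 13$, and there is a concrete counterexample to your chain: for $n=10$ and $\Lambda=(2^5)$ the exponent equals $8$ exactly, so your bound gives $R_2(\Lambda)\le n^2 2^{-8}=\tfrac{25}{64}\approx 0.39$, which is not below $2^{-20/7}\approx 0.14$; the lemma survives only because $R_2(\Lambda)=\tfrac{10\cdot 9}{2\cdot 5}2^{-8}=\tfrac{9}{256}$, i.e.\ the factor $\frac{1}{\lambda_1 l_1}$ (worth roughly $\log_2 n$ extra bits precisely when $l_1$ is of order $n$) is indispensable. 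The proof in the text retains this factor, carrying the terms $\log l_1+\log\lambda_1$ through the estimate; you must either do the same or dispose of the types with $\lambda_1=2$ and $l_1$ near $n/2$ for $9\le n\le 13$ by direct computation. So the claimed ``comfortable slack'' does not exist, and for $n\ge 9$ (which is exactly the range the later induction needs) part (1) is not established by your argument as written.

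For $\lambda_1=1$ your collapse to $R_1(\Lambda)=\frac{n}{l_1}2^{-\nu}$ with $\nu\ge s\ge 1$ is the same reduction as in the text, and the characterization $\nu=1\iff\Lambda=(1^{n-2},2)$ settles (2), provided you also check strict inequality for $\nu\ge 2$ (which does hold for $n\ge 9$). However, your promised stratified analysis cannot ``verify the bound in (3)'' because that bound is not correct as stated: the type $(1^{n-4},2^2)$ has $\nu=2$, $l_1=n-4$, hence $R_1=\frac{n}{4(n-4)}>\frac{n}{4(n-3)}$, and for $n=9$ the type $(1,2^4)$ gives $R_1=\frac{9}{16}$. (The same oversight occurs in the text's own proof of (3), which asserts $n-L\ge 4$ and takes $(1^{n-3},3)$ as the maximizer.) Your exact-value minimization of $l_1 2^{\nu}$ is the right tool, but it will produce the bound $\frac{n}{4(n-4)}$ (with the one exceptional type at $n=9$) rather than $\frac{n}{4(n-3)}$; this weaker bound is still ample for the subsequent induction, so you should state and prove that instead.
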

\begin{proof}

Firstly, suppose $\lambda_1\ge 2$. Set $s = \frac{ \lambda_1( 2n+ 7 \lambda_1)}{14}$; observe that 
\begin{align*}
\log n<\frac{5n}{14}=\frac{n}{2}- \frac{s}{\lambda_1} - \frac{\lambda_1}{2} 
\end{align*}
holds for $n\ge 9$. Here $\log$ is the logarithm to the base $2$. 
As $\lambda_1\ge 2$,  we have $\sum_{i\ge 1} l_i\le n/2$, so we may write 
\begin{align*}
& \frac{n}{2}- \frac{s}{\lambda_1} - \frac{\lambda_1}{2}< n - \sum\limits_{j\ge 1}  l_j  - \frac{s}{\lambda_1} -\frac{\lambda_1}{2} +   \frac{1}{\lambda_1} \left( \log(l_1)+ \log(\lambda_1)\right ).  \nonumber
%&\log n< \frac{\lambda_1 n-2s- \lambda_1^2}{2 \lambda_1} + \frac{1}{\lambda_1} \left( \log(l_1)+ \log(\lambda_1)  \right ) \nonumber \\
\end{align*}
Since  $n= \sum_i \lambda_i l_i$ we have (by rearranging the terms)
\begin{align}
& n - \sum\limits_{j\ge 1}  l_j  - \frac{s}{\lambda_1} -\frac{\lambda_1}{2} +   \frac{1}{\lambda_1} \left( \log(l_1)+ \log(\lambda_1)\right ) \label{eq1}\\
&= \sum_i \lambda_i l_i - \left(l_1 + \frac{1}{\lambda_1}\sum\limits_{j\ge 2} \lambda_1 l_j \right)- \frac{s}{\lambda_1} -\frac{\lambda_1}{2} +   \frac{1}{\lambda_1} \left( \log(l_1)+ \log(\lambda_1)\right ) \label{eq2}\\
&= \frac{-s}{\lambda_1}+ \frac{\lambda_1}{2} + l_1 \lambda_1 -\lambda_1 -l_1 +  \sum\limits_{j\ge 2}^t \lambda_jl_j  + \frac{1}{\lambda_1} \left(  \log(l_1)+ 
\log(\lambda_1)  - \sum\limits_{j\ge 2}^t \lambda_1 l_j \right )  \label{eq3}\\
 & =  \frac{-s}{\lambda_1}+ \frac{\lambda_1}{2} + l_1 \lambda_1 -\lambda_1 -l_1 + \frac{1}{\lambda_1} \left( \sum\limits_{j\ge 2}^t (\lambda_1\lambda_j  - \lambda_1)l_j +  \log(l_1)+ \log(\lambda_1) \right ). \label{eq4}
\end{align}
To elaborate, we rewrite $n=\sum_j \lambda_j l_j$ in \eqref{eq1} and write $\sum_j l_j$ as $l_1 + \frac{1}{\lambda_1}\sum\limits_{j\ge 2} \lambda_1 l_j$ to get \eqref{eq2}; \eqref{eq3} results from \eqref{eq2} by rearranging terms, writing $-\frac{\lambda_1}{2}$ as $\frac{\lambda_1}{2}-\lambda_1$ and also isolating the term $\lambda_1 l_1$ from  $\sum_j \lambda_j l_j$, and finally \eqref{eq4} is again a suitable rearrangement of \eqref{eq3}.

%\log n < \frac{n}{2}- \frac{s}{\lambda_1} - \frac{\lambda_1}{2} &< \frac{-s}{\lambda_1}+ \frac{\lambda_1}{2} + l_1 \lambda_1 -\lambda_1 -l_1 +  \sum\limits_{j\ge 2}^t \lambda_jl_j  + \frac{1}{\lambda_1} \left(  \log(l_1)+ 
%\log(\lambda_1)  - \sum\limits_{j\ge 2}^t \lambda_1 l_j \right ) \nonumber \\
% & =  \frac{-s}{\lambda_1}+ \frac{\lambda_1}{2} + l_1 \lambda_1 -\lambda_1 -l_1 + \frac{1}{\lambda_1} \left( \sum\limits_{j\ge 2}^t (\lambda_1\lambda_j  - \lambda_1)l_j +  \log(l_1)+ \log(\lambda_1) \right ). \nonumber \end{align*}
 Since $\lambda_1 \ge ( \lambda_1, \lambda_j)$, we have for $n\ge 9$, 
 %\iff & \log(n)< \frac{-s}{\lambda_1}+ \frac{\lambda_1}{2} -1  + l_1 \lambda_1 -\lambda_1 -l_1+1 + \frac{1}{\lambda_1} \left( \sum\limits_{j\ge 2}^t (\lambda_1\lambda_j  - (\lambda_1, \lambda_j))l_j +  \log(l_1)+ \log(\lambda_1) \right ) \nonumber \\ 
%&\intertext{{\color{blue}Observe that  }}\\
$$ \log n< \frac{-s}{\lambda_1}+ \frac{\lambda_1 -2}{2}  + (\lambda_1 -1)(l_1-1) + \frac{1}{\lambda_1} \left( \sum\limits_{j\ge 2}^t g(\lambda_1, \lambda_j )l_j +  \log(l_1)+ \log(\lambda_1) \right )$$ 
Since $g(x)\le (x^2-2x)/2$ we have 
$$ \lambda_1 \log n < -s+ g(\lambda_1) + g(\lambda_1, \lambda_1 )(l_1-1) + \sum\limits_{j\ge 2}^t g(\lambda_1, \lambda_j )l_j +  \log(l_1)+ \log(\lambda_1)$$
and thus,  $$n^{\lambda_1}< 2^{-s} l_1 \lambda_1  2^{g(\lambda_1) + g(\lambda_1, \lambda_1 )(l_1-1) + \sum\limits_{j\ge 2}^t g(\lambda_1, \lambda_j )l_j}$$ which  achieves the first part of the lemma since $n\lambda_1/7 < \frac{ \lambda_1( 2n+ 7 \lambda_1)}{14}$.

When $\lambda_1=1$, then note that $$R_1(\Lambda)\le\frac{n}{l_1}2^{(\sum\limits_{j\ge 2} (1-\lambda_j)l_j)}=\frac{n}{l_1}2^{L-n},$$ where $L=\sum_{j\ge 1} l_j$.   Since $\lambda_2\ge 2$, it follows that  $n-L\ge (n-l_1)/2$, so we have $$R_1(\Lambda)\le\frac{n}{l_12^{(n-l_1)/2}}. $$
It follows by elementary calculus (for instance) that since the function $h(x)=x2^{(n-x)/2}$ defined on $[1,n-2]$ achieves its minimum value of $2(n-2)$ at $x=n-2$, we have 
$$R_1(\Lambda)\le\frac{n}{2(n-2)}$$ as required. \\
If  $\Lambda$ corresponds to a permutation type of a non-trivial permutation and $\Lambda\ne(1^{n-2},2)$ then arguing as before, we observe that in that case, $n-L\ge 4$, and among such permutation types, $R_1(\Lambda)$ is maximum for $\Lambda=(1^{n-3},3)$, and for this $\Lambda$, $R_1(\Lambda)\le\frac{n}{4(n-3)}$. This completes the proof. \end{proof}

Set $f(n) := \sum\limits_{\sigma \in S_n }P( \sigma)$. Let $f_{\ge i}(n) $ denote the corresponding sum over all those permutations $\sigma\in S_n$ in which every cycle has size at least $i$. Also set $P(n) := P(\Lambda =n).$ 
\begin{lem}\label{f(n)<1}
For any $n$, 
$$ f(n) < \frac{n}{2(n-2)} f(n-1) + \sum\limits_{i=2}^{\left \lfloor n/2\right \rfloor} 2^{-ni/7}f_{\ge i}(n-i) + P(n).$$
\end{lem}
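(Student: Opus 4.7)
My strategy is to decompose $f(n) = \sum_{\Lambda \in CT^{(n)}} P(\Lambda)$ according to the smallest cycle length $\lambda_1$ of $\Lambda$ and then feed the decomposition into the recursion
\[
P(\Lambda) \;=\; R_{\lambda_1}(\Lambda)\,P(\Gamma),
\]
stated just after Lemma \ref{lemcore}, where $\Gamma \in CT^{(n-\lambda_1)}$ is obtained from $\Lambda$ by removing one cycle of length $\lambda_1$. The only cycle type with $\lambda_1 > \lfloor n/2 \rfloor$ is the single $n$-cycle, so isolating its contribution $P(n)$ yields
\[
f(n) \;=\; \sum_{\lambda_1 = 1} P(\Lambda) \;+\; \sum_{i=2}^{\lfloor n/2 \rfloor}\;\sum_{\lambda_1 = i} P(\Lambda) \;+\; P(n).
\]

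The main combinatorial observation is that for each fixed $i \ge 1$, the cycle-removal map $\Lambda \mapsto \Gamma$ is a bijection between $\{\Lambda \in CT^{(n)} : \lambda_1(\Lambda) = i\}$ and $\{\Gamma \in CT^{(n-i)} : \lambda_1(\Gamma) \ge i\}$: if $\Lambda$ had $l_1 \ge 2$ cycles of length $i$, then $\Gamma$ retains one of them, while if $l_1 = 1$ then $\Gamma$'s new smallest cycle is $\lambda_2 > i$. Going backwards, prepending an $i$-cycle to any such $\Gamma$ returns a unique $\Lambda$. For $i = 1$ there is no restriction on $\Gamma$, so the target is all of $CT^{(n-1)}$ and the $\Gamma$-sum collapses to $f(n-1)$.

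Plugging in the pointwise bounds from Lemma \ref{r<1} -- namely $R_1(\Lambda) \le \frac{n}{2(n-2)}$ and $R_i(\Lambda) < 2^{-ni/7}$ for $i \ge 2$ -- gives
\[
\sum_{\lambda_1 = 1} P(\Lambda) \;\le\; \frac{n}{2(n-2)}\,f(n-1), \qquad \sum_{\lambda_1 = i} P(\Lambda) \;<\; 2^{-ni/7}\,f_{\ge i}(n-i) \quad (i \ge 2).
\]
Summing the two displays together with $P(n)$ delivers the claim. Strictness of the overall inequality is inherited from the strict bound on $R_i$ for any single $i \ge 2$, and such a band contributes a positive quantity whenever $n \ge 4$.

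The main bookkeeping subtlety is that Lemma \ref{r<1} is really a statement about non-trivial $\Lambda$ -- the trivial cycle type $(1^n)$ has $R_1((1^n)) = 1$, which does not respect the bound $\frac{n}{2(n-2)}$ for $n \ge 5$. Provided $f(n)$ is read (consistently with how $P(\sigma)$ is defined in Lemma \ref{lemcore}) as a sum over \emph{non-trivial} $\sigma$, the identity drops out of both sides and the recursion passes through unharmed. Beyond this point there is no computation: the lemma is a direct assembly from the already-proved Lemma \ref{r<1}.
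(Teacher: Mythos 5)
Your proposal is correct and follows essentially the same route as the paper: decompose $f(n)$ by the smallest cycle length, use the extension identity $P(\Lambda)=R_{\lambda_1}(\Lambda)P(\Gamma)$ together with the bijection between types with $\lambda_1=i$ and types in $CT^{(n-i)}_{\ge i}$, and then apply the bounds of Lemma~\ref{r<1}. Your explicit remark that the trivial type $(1^n)$ must be excluded for the $R_1$ bound to apply is a point the paper glosses over, but it does not change the argument.
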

\begin{proof}
%By the definition we have $f(n) = \sum_{\sigma \in S_n }P( \sigma).$ From lemma \ref{lemcore}, observe that calculation of $P(\sigma)$ depends only on its cycle type. 
 Observe that any cycle type in $CT^{(n)}_{i}$ is an extension of a unique cycle type in $CT^{(n-i)}_{\ge i}$. Also, since for a fixed cycle type $\Lambda \in CT^{(n)},$ there are exactly $N(\Lambda) = n! \prod_{i=1}^t \frac{\lambda_i^{-l_i}}{(l_i)!}$ permutations of  type $\Lambda$, we have 
\begin{align}
f(n) =&\sum\limits_{\Lambda \in CT^{(n)} }N(\Lambda)P(\Lambda) = \sum\limits_{i=1}^{\left \lfloor n/2 \right \rfloor} \sum\limits_{\Lambda \in CT^{(n)}_{i} }N(\Lambda) P(\Lambda) + P(n) \nonumber\\
%=& \sum\limits_{\Lambda \in CT_{n=1} }N(\Lambda) P(\Lambda) + \sum\limits_{\Lambda \in CT_{n=2} }N(\Lambda) P(\Lambda)+ \dots + \sum\limits_{\Lambda \in CT_{n=\left \lfloor n/2 \right \rfloor} }N(\Lambda) P(\Lambda) + P(n) \label{eqth1}\\
=& \sum\limits_{i=1}^{\left \lfloor n/2 \right \rfloor} \sum\limits_{\Lambda \in CT^{(n)}_{\ge i} }N(\Lambda) R_i(\Lambda) P(\Lambda) +P(n) \nonumber
%\sum\limits_{\Lambda \in CT_{n-1} }R_1(\Lambda) N(\Lambda) P(\Lambda) + \sum\limits_{\Lambda \in CT_{n-2\ge 2} } R_2(\Lambda) N(\Lambda) P(\Lambda) \nonumber\\ +& \sum\limits_{\Lambda \in CT_{n-3 \ge 3} }R_3(\Lambda) N(\Lambda) P(\Lambda) + \dots + \sum\limits_{\Lambda \in CT_{n-\left \lceil n/2 \right \rceil \ge \left \lfloor n/2 \right \rfloor} }R_{\left \lfloor n/2 \right \rfloor}(\Lambda) N(\Lambda) P(\Lambda) + P(n). 
\label{eqth2}   
\end{align}
%Observe that equation \eqref{eqth2} is obtained from equation \eqref{eqth1} by considering all the cycle types in $S_{n-\lambda_1}$ with minimum cycle length greater than or equal to $\lambda_1$ as a deduction from respective cycle types in $S_n$ whose first (or minimum) entry is $\lambda_1.$
By the bounds for $R_{\lambda_1}(\Lambda)$ from lemma \ref{r<1}, we have  
 %Let $\mu$ be $\sum \limits_{j \ge 2}^t g( 1, \lambda_j)l_j,$ then we show 
%\begin{align}
%&\frac{n}{l_1}  \le \frac{n}{2(n-2)} 2^{ \mu} \nonumber \\
%\implies & 2(n-2) \le  l_1 2^{ \mu} \label{lambda=1}
%\end{align}
%where  $$\mu= \sum \limits_{j \ge 2}^t g( 1, \lambda_j)l_j = \sum \limits_{j \ge 2}^t  \lambda_j l_j - \sum \limits_{j \ge 2}^t  l_j $$ because $g(1, \lambda_j) = \lambda_j- (\lambda_j,1)  = \lambda_j-1.$ Since $n = \sum \limits_{j \ge 1}^t  \lambda_j l_j,$ $\lambda_2 \ge 2,$ and $\lambda_1 =1$ we have $\sum \limits_{j \ge 2}^t  l_j \le \frac{n- l_1}{2}.$  Similarly by the same arguments we have $\sum \limits_{j \ge 2}^t  \lambda_j l_j = n- l_1.$ Therefore $\mu \ge n-l_1 - \frac{n-l_1}{2} = \frac{n-l_1}{2}.$ 
%Therefore to show inequality \eqref{lambda=1}, it is enough to show that 
%$$2(n-2) \le  l_1 2^{ \frac{n-l_1}{2}} .$$ 
%{\color{blue} This can be done in two ways 1. By induction on $n.$ One can easily check this inequality holds for $n=9$ and $1 \le \lambda_1 \le \frac{9}{2}$. Observe that 
%\begin{align}
%&0 \le  l_1 2^{ \frac{n-l_1}{2}} - 2(n-2) \\
%\implies & 0 \le  l_1 2^{ \frac{n-1-l_1}{2}} 2^{\frac{1}{2}} - 2(n-3) + 2 \\
%\end{align}
%By induction hypothesis, we have $ l_1 2^{ \frac{n-1-l_1}{2}} - 2(n-3) \ge 0.$ Therefore $l_1 2^{ \frac{n-1-l_1}{2}} 2^{\frac{1}{2}} - 2(n-3) + 2 \ge 0.$ 
\begin{align}
f(n) < & \sum\limits_{\Lambda \in CT^{(n-1)} } \frac{n}{2(n-2)} N(\Lambda) P(\Lambda) + \sum\limits_{i=2}^{\left \lfloor n/2 \right \rfloor}\sum\limits_{\Lambda \in CT^{(n)}_{\ge i} }  2^{-in/7} N(\Lambda) P(\Lambda) \nonumber\\ 
%+& \sum\limits_{\Lambda \in CT_{n-3 \ge 3} } 2^{\frac{-3n}{7}} N(\Lambda) P(\Lambda) + \cdots +    \sum\limits_{\Lambda \in CT_{n-\left \lceil n/2 \right \rceil \ge \left \lfloor n/2 \right \rfloor} } 2^{\frac{-\left \lceil n/2 \right \rceil n}{7}} N(\Lambda) P(\Lambda) + P(n)\nonumber \\
<&\  \  \frac{n}{2(n-2)} f(n-1) + \sum\limits_{i=2}^{\left \lfloor n/2\right \rfloor} 2^{-ni/7}f_{\ge i}(n-i) + P(n).
\end{align}
\end{proof}
%We are now in a position to prove what we set out to do.
\begin{thm}
$f(n)<1$ for all $n\ge 8$. In fact, for all $n\ge 8$, $$f(n) \le \frac{kn^2}{2^{n}}$$ for some absolute constant $k$. Consequently, we have $D_l(K(n,2))=2$ for $n\ge 8$.

\end{thm}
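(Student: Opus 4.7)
The plan is to prove $f(n) \le k n(n-1)/2^n$ for all $n \ge 8$, for some absolute constant $k$, which implies both $f(n) \le k n^2/2^n$ and (after checking a handful of small values of $n$ if needed) $f(n) < 1$. The latter gives $D_l(K(n,2)) = 2$ by the probabilistic method: by linearity of expectation $f(n)$ upper bounds the expected number of nontrivial automorphisms fixing every color class of the random list-coloring, so $f(n) < 1$ forces the existence of some realization in which no nontrivial automorphism does.

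The key step is to iterate the recursion from Lemma~\ref{f(n)<1}. Writing it as $f(n) \le \tfrac{n}{2(n-2)} f(n-1) + s(n)$, with $s(n) = \sum_{i \ge 2} 2^{-ni/7} f_{\ge i}(n-i) + P(n)$, iteration back to $n_0 = 8$ yields
$$f(n) \le f(8) \prod_{m=9}^{n} \frac{m}{2(m-2)} + \sum_{m=9}^{n} s(m) \prod_{l=m+1}^{n} \frac{l}{2(l-2)}.$$
Each product telescopes to $\tfrac{n(n-1)}{m(m-1)\cdot 2^{n-m}}$ (with $m = 8$ in the first), so factoring out $n(n-1)/2^n$ leaves
$$f(n) \le \frac{n(n-1)}{2^n}\left(\frac{32 f(8)}{7} + \sum_{m=9}^{n} \frac{2^{m} s(m)}{m(m-1)}\right),$$
and the task reduces to bounding the expression in parentheses by an absolute constant.

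For the base value $f(8)$, I would enumerate the $p(8)=22$ cycle types of $S_8$ and apply Lemma~\ref{lemcore} directly. The bulk of the sum is carried by transpositions (contributing $\binom{8}{2}2^{-6}\approx 0.44$) and double transpositions ($\approx 0.2$), the rest being considerably smaller, so $f(8)<1$ comfortably. For the series $\sum_m 2^m s(m)/(m(m-1))$, the essential input is that $f_{\ge i}(m-i)$ decays super-exponentially in $m$: for any fixed-point-free permutation the exponent $\mu$ of Lemma~\ref{lemcore} grows quadratically in $m$ (e.g.\ the perfect-matching type $(2^{(m-i)/2})$ has $\mu \sim m^2/4$ and contributes $O(2^{-\Omega(m^2)})$; all other derangement types contribute even less). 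Consequently each $s(m)$ is $O(2^{-\Omega(m^2)})$, the series converges to an absolute constant $K$, and $k = 32 f(8)/7 + K$ works.

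The main obstacle, I think, lies in controlling this series. A naive bound $f_{\ge i}(m-i) \le f(m-i) = O(m^2/2^m)$ (the very statement being proved) produces a tail $2^m \cdot 2^{-2m/7} \cdot O(m^2/2^m)/(m(m-1)) = O(2^{5m/7}/m^2)$, which diverges; one therefore has to exploit the additional structural constraints imposed by the absence of fixed points to get the stronger quadratic-in-$m$ decay. Once this is in hand, $f(n) = O(n^2/2^n)$ follows, whence $f(n) < 1$ for all $n \ge 8$ (directly from the bound for large $n$, and by the same direct enumeration for the finitely many small $n$ where $kn^2/2^n$ is not yet $<1$), and therefore $D_l(K(n,2)) = 2$ for all $n \ge 8$.
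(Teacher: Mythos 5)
Your route rests on the same two pillars as the paper's: the recursion of Lemma \ref{f(n)<1} together with direct evaluation (via Lemma \ref{lemcore}) of the small quantities $f(8)$, $f_{\ge 2}(7)$, $f_{\ge 3}(6)$, $f_{\ge 4}(5)$, and the same first-moment deduction of $D_l(K(n,2))=2$ at the end. The difference is how the recursion is closed: you unroll it and telescope the products $\prod_l \frac{l}{2(l-2)}$ back to the base case and then sum a series of correction terms (your telescoping identity and the factor $32f(8)/7$ are correct), whereas the paper verifies the ansatz $f(n)\le 20n^2/2^n$ by strong induction, isolating the $(1^{n-2},2)$ term (which contributes exactly $n(n-1)/2^{n-1}$), using $R_1(\Lambda)\le n/4(n-3)$ from Lemma \ref{r<1} for the other fixed-point types, and bounding $f_{\ge i}(n-i)\le f(n-i)\le 20(n-i)^2/2^{n-i}$ by the induction hypothesis.

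Two remarks. First, the ``main obstacle'' you identify comes from an arithmetic slip: with $f_{\ge i}(m-i)\le f(m-i)=O\bigl((m-i)^2\,2^{-(m-i)}\bigr)$, the $i=2$ contribution to $2^m s(m)/(m(m-1))$ is $O(2^{-2m/7})$, not $O(2^{5m/7}/m^2)$ (you dropped the $2^{-(m-i)}$ decay), so this series converges; indeed the paper's induction uses exactly this bound, and inside a strong induction it is available at all smaller arguments, so there is no circularity. Second, your substitute estimate --- that $f_{\ge i}(m)=2^{-\Omega(m^2)}$ for $i\ge 2$ --- is true and would also close the argument, but in your write-up it is only asserted (``all other derangement types contribute even less''). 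It does follow cleanly from Lemma \ref{lemcore}: $\mu$ equals $\binom{m}{2}$ minus the number of edge orbits of $\sigma$, and for a fixed-point-free $\sigma$ the only fixed edges come from its $2$-cycles (at most $m/2$ of them) while every other edge orbit has size at least $2$, so $\mu\ge \tfrac12\bigl(\binom{m}{2}-\tfrac m2\bigr)$ and hence $f_{\ge 2}(m)\le m!\,2^{-\mu}=2^{-\Omega(m^2)}$. With that lemma supplied your version goes through; without it, the simpler fix is the paper's inductive use of the quadratic-over-exponential bound, which you mistakenly ruled out as divergent.
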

\begin{proof}
This proof is by induction on $n.$ It is straightforward, though a little tedious to check $f(8) \approx 0.874 < 1 $ by calculating $\sum\limits_{\Lambda \in CT_8}N(\Lambda)P(\Lambda)$ directly;  we also check that $f_{\ge 4}(5),$ $f_{\ge 3}(6)$ and $f_{\ge 2}(7)$ are strictly less than one. Furthermore,
\begin{align}
f_{\ge 2}(7) =& \frac{7!}{2^2 2! 3}2^{-12} + \frac{7!}{12}2^{-2-4-11} + \frac{7!}{10}2^{-8-9} + \frac{7!}{7}2^{-18} \approx  0.061. \label{7=n>2} \\
%=&\frac{7!}{24}2^{-12} + \frac{7!}{12}2^{-17} + \frac{7!}{10}2^{-17} + \frac{7!}{7}2^{-18} \nonumber \\
%\approx & 0.0512+ 0.0032 + 0.0038+0.0027 \nonumber \\
f_{\ge 3}(6) =&\frac{6!}{3^2 2!}2^{-2-2-6} + \frac{6!}{6}2^{-12}\approx  0.0683.\label{6=n>3} \\
%=&\frac{6!}{18}2^{-10} + \frac{6!}{6}2^{-12}\nonumber \\
%\approx & 0.039+ 0.0292 \nonumber \\
f_{\ge 4}(5) = &\frac{5!}{5}2^{-8}\nonumber \approx  0.0937.\label{5=n>4}
\end{align}      
Also, 
$P(n) = \frac{n!}{n}2^{-\lfloor \frac{(n-1)^2}{2} \rfloor}.$ Since $P(n)$ is monotonically strictly decreasing for $n\ge 3$, we may bound  $P(n)<P(9) = 8! 2^{-32} \approx 0.0000093.$   

Assume $f(k) < 1$ for $8 \le k \le n-1.$ By lemma \ref{f(n)<1} we have  $$f(n)\le \frac{n}{2(n-2)} f(n-1) + \sum\limits_{r=2}^{\left \lfloor n/2\right \rfloor} 2^{\frac{-nr}{7}}f_{\ge r}(n-r) + P(n)$$ so   
$$
f(n)  \le  \frac{n}{2(n-2)} +\sum\limits_{r=2}^{\lfloor n/2\rfloor} 2^{\frac{-nr}{7}}  + 0.0000093.
$$
 Since $S_n < (2^{n/7}(2^{n/7}-1))^{-1}<0.3$ for $n\ge 9$, we have $f(n) <1$ when $n \ge 9.$

For the exponentially decaying upper bound, we again proceed to do so inductively. The only difference is that this time, we are slightly more careful with our bounds, though we do not attempt to optimize for the constant $k$. We shall show that $f(n)\le 20n^2/2^n$ holds for all $n\ge 8$.

 It is easy to see that this statement holds for $n\le 11$ since $20n^2/2^n$ is greater than $1$ for all these values of $n$. In computing $f(n)$ through the application of lemma \ref{f(n)<1}, we isolate the terms arising from permutations of type $(1^{n-2},2)$ and note that their contribution to the sum $f(n)$ is precisely $n(n-1)/2^{n-1}$. For the remaining $\Lambda$ with $\lambda_1=1$, as observed in lemma \ref{r<1}, we have $R_1(\Lambda)\le n/4(n-3)$. Piecing these together, and by induction, we have 
 %we have $f(8) < \frac{k.8^2}{2^8}$ and assume $f(t) \le \frac{k.t^2}{2^k}$ when $8 \le t \le n-1.$ Therefore by applying induction hypothesis and the observation $f_{n \ge i}(n-i) \le f(n-i)$ in \eqref{fnbnd}, we have  

$$f(n) < \frac{n^2}{2^{n-1}} + \frac{n}{4(n-3)}\frac{20(n-1)^2}{2^{n-1}} + \frac{20n^2}{2^n}\sum\limits_{i\ge 2} \left(\frac{2}{2^{n/7}}\right)^i+ \left(\frac{4n}{2^n}\right)^n.$$ 

Now, our choice of constants gives us that for $n\ge 12$, $\left(\frac{(n-1)^2}{2n(n-3)}+\frac{1}{2^{n/7}(2^{n/7}-2)}\right) < 0.8$, so, the right hand side of the expression above is at most $18n^2/2^n + \left(\frac{4n}{2^n}\right)^n < 19n^2/2^n$, and the induction is complete.
 \end{proof}
\textbf{Remark:} As observed in the proof, $f(n)\ge \displaystyle\frac{\binom{n}{2}}{2^{n-2}}$, so we actually have $f(n)=\Theta(n^2/2^n)$.

\section{List distinguishing number of $K(n,r)$ when $r\ge 3$}\label{r>2}
In this section we show that  $D_l(K(n,r))=2$ for $r\ge 3, n\ge 2r+1$ holds with positive probability for a random coloring, where as before the random coloring is obtained by choosing for each vertex $v$, a color uniformly from its list, and independently across the vertices. Recall that the vertices of $K(n,r)$ correspond to $r$-subsets of $[n]:=\{1,2\ldots,n\}$ and vertices $u,v\in V(K(n,r))$ are adjacent if and only if $u\cap v=\emptyset$. As before, suppose that the vertex $v$ is assigned a color list of size $2$.  
%For any permutation $\sigma\in S_n$ we define the orbit of a vertex $v$ as the set $Orb_{\sigma} v := \{v , \sigma(v), \sigma^2(v), \dots, \sigma^k(v)\}$, where $\sigma^{k+1}(v) = v$. We say that a subset $S$ of vertices is \textit{monochromatic} if all the vertices in $S$ are assigned the same color. 
\begin{lem}\label{2cycle}
Consider the random coloring of $G= K(n,r), r \ge 3$. Let $\sigma$ be a nontrivial permutation of type $\Lambda$ that fixes every color class. Then
$$P(\sigma) < \frac{1}{2^m} \textrm{\ where\ } m= {n-2 \choose r-1}.$$
\end{lem}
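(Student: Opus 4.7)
The plan is to analyze $P(\sigma)$ through the orbits of $\sigma$ on $V(K(n,r))=\binom{[n]}{r}$. The key observation is that $\sigma$ fixes every color class if and only if every orbit of $\sigma$ on this vertex set is monochromatic in the random coloring; since vertices are colored independently, the monochromaticity events for distinct orbits are mutually independent.

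First I would estimate the per-orbit contribution. For an orbit $O$ of size $k$, the probability that all vertices in $O$ receive a common color equals $|\bigcap_{v\in O}L(v)|/2^k \leq 2^{1-k}$. Multiplying these independent bounds across all orbits gives
$$P(\sigma) \leq 2^{-\sum_O(|O|-1)} = 2^{-(\binom{n}{r}-N(\sigma))},$$
where $N(\sigma)$ denotes the number of orbits of $\sigma$ on $\binom{[n]}{r}$. It therefore suffices to show $\binom{n}{r}-N(\sigma) > m = \binom{n-2}{r-1}$ for every nontrivial $\sigma$.

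Second, I would argue combinatorially that $\binom{n}{r} - N(\sigma) \geq m$ for every nontrivial $\sigma$, with equality attained essentially only by a transposition. For $\tau = (a\ b)$, the fixed $r$-subsets are those containing both or neither of $a,b$, yielding $\binom{n-2}{r-2}+\binom{n-2}{r}$ fixed subsets and $\binom{n-2}{r-1}$ orbits of size two, hence $\sum_O(|O|-1) = \binom{n-2}{r-1} = m$. Other cycle types give strictly more: a single $3$-cycle, for example, produces $\binom{n-3}{r-1}+\binom{n-3}{r-2} = \binom{n-2}{r-1}$ orbits of size $3$, contributing $2\binom{n-2}{r-1}$ to the sum by Pascal's identity; longer cycles or additional moved points only add further, and a routine induction on the number of moved points (or on the cycle type) formalizes this.

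The main obstacle is obtaining the strict inequality at a transposition, because the orbit count alone delivers exactly $2^{-m}$ there. I would close this gap by observing that the per-orbit bound $|\bigcap L(v)|/2^k \leq 2^{1-k}$ is tight only when all lists on the orbit agree; for $\tau = (a\ b)$, each orbit pair is $\{\{a\}\cup T,\{b\}\cup T\}$ with $|T| = r-1$, and if \emph{every} such pair has identical lists across all $T$, then the list assignment collapses to a uniform $2$-coloring problem to which the classical result $D(K(n,r))=2$ for $r\geq 3$ and $n\geq 2r+1$ applies directly. Otherwise at least one orbit contributes a strict factor, yielding $P(\sigma) < 2^{-m}$.
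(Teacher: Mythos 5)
Your route is genuinely different from the paper's, and its core reduction is sound: coloring choices are independent across vertices, each orbit $O$ is monochromatic with probability $|\bigcap_{v\in O}L(v)|/2^{|O|}\le 2^{1-|O|}$, and orbits are disjoint, so $P(\sigma)\le 2^{-(\binom{n}{r}-N(\sigma))}$. The paper avoids orbit counting altogether: it fixes one cycle $(1,2,\dots,t)$ of $\sigma$, considers only the $\binom{n-2}{r-1}$ vertices containing $1$ but not $2$, notes each such $v$ is paired with the distinct vertex $\sigma(v)$ (which contains $2$, hence lies outside the family), and that these $m$ disjoint pairs must all be monochromatic --- giving the bound at once, uniformly in the cycle type. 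Your version instead transfers the work to the extremal claim $\binom{n}{r}-N(\sigma)\ge\binom{n-2}{r-1}$ for every nontrivial $\sigma$, which you verify only for a transposition and a $3$-cycle and then defer to a ``routine induction.'' The claim is true, and there is a short argument you should supply: if $\sigma(a)=b\ne a$, then every $r$-set containing exactly one of $a,b$ is moved (if $\sigma(S)=S$ and $b\in S$, the whole $\sigma$-cycle of $b$, hence $a$, lies in $S$), so at least $2\binom{n-2}{r-1}$ vertices lie in non-singleton orbits, and each orbit of size $k\ge 2$ contributes $k-1\ge k/2$ to $\binom{n}{r}-N(\sigma)$. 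Observe that this filling-in is essentially the paper's proof, so the orbit-counting layer buys generality you do not need while creating an extra extremal question.

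The genuine gap is your handling of the equality case at a transposition. First, ``every orbit pair $\{\{a\}\cup T,\{b\}\cup T\}$ has identical lists'' does not collapse the instance to a uniform $2$-coloring problem: the common list can still vary from pair to pair (and the lists of the fixed vertices are unconstrained), so $D(K(n,r))=2$ is not applicable in the way you invoke it. Second, and more fundamentally, $D(K(n,r))=2$ asserts the existence of some distinguishing $2$-coloring; it gives no upper bound on $P(\sigma)$, which is what the lemma asserts, so it cannot deliver $P(\sigma)<2^{-m}$. In fact no argument can: for a transposition with all lists identical, every orbit has size $1$ or $2$ and $P(\sigma)$ equals $2^{-m}$ exactly, so the strict inequality is unattainable (the paper's own proof says the probability is ``precisely'' $2^{-m}$, and only the weak bound $P(\sigma)\le 2^{-m}$ is used in the theorem, where the strictness comes from $n!\,2^{-m}<1$). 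The correct repair is to prove and use $P(\sigma)\le 2^{-m}$ and drop the patch; as written, the final step of your argument is not valid.
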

\begin{proof}
Without loss of generality suppose $\sigma$ has the cycle $(1,2, \hdots, t)$ for some $2\le t \le n.$ Let $v$ be a vertex corresponding to a set containing the element $1$, but not the element $2$ in $[n]$. Then since $2\in\sigma(v)$ it follows that  $v\ne\sigma (v)$. Therefore, if $\sigma$ fixes every color class, since $v$ and $\sigma(v)$ for each vertex $v$ are assigned the same color, each set of size $r$ containing $1$ but not containing $2$ must get mapped to a distinct subset, not of the same form, and each of these pairs of vertices are monochrome pairs. The probability of the aforementioned event is precisely $2^{-m}$ as stated in the lemma.
\end{proof}
\begin{thm}
If $r \ge 3$ and $n > 2r+1,$ then $D_l(K(n,r))= 2.$
\end{thm}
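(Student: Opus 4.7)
The plan is to apply the probabilistic method in the style of Section \ref{r=2}: for each $v \in V(K(n,r))$ draw a color uniformly and independently from $L(v)$, and show that the expected number of non-trivial automorphisms of $K(n,r)$ fixing every color class is strictly less than $1$. This gives the existence of a list-distinguishing coloring with $2$ colors, proving $D_l(K(n,r)) \le 2$; combined with the trivial $D_l \ge D = 2$, this yields the theorem. Since $Aut(K(n,r)) = S_n$ (Erd\H{o}s--Ko--Rado, as already noted), the expectation equals $\sum_{\sigma \in S_n \setminus \{e\}} P(\sigma)$, and Lemma \ref{2cycle} controls each summand: $P(\sigma) \le 2^{-m}$ with $m := \binom{n-2}{r-1}$.

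A crude union bound therefore succeeds as soon as $(n!-1)\cdot 2^{-m} < 1$, i.e. $n! \le 2^m$. A direct check shows this holds for every $(r,n)$ with $r \ge 3$ and $n > 2r+1$ except the lone borderline case $(r,n) = (3,8)$, where $8! = 40320$ narrowly exceeds $2^{15}=32768$. Indeed, for $r \ge 4$ the exponent $\binom{n-2}{r-1}$ is a polynomial in $n$ of degree $\ge 3$ easily dominating $\log_2(n!) = O(n\log n)$, and for $r=3$ one checks $n! < 2^{(n-2)(n-3)/2}$ directly for every $n \ge 9$.

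For the remaining case $(r,n)=(3,8)$ the plan is to refine the bound for non-transpositions. A single transposition $\sigma$ has precisely $m$ orbits of size two on $V(K(n,r))$ and everything else fixed, so the extremal value $P(\sigma) \le 2^{-m}$ is attained; but there are only $\binom{n}{2}=28$ such $\sigma$. For every other non-identity $\sigma$, an orbit count on $V(K(8,3))$ case-by-case in cycle type yields the sharper bound $P(\sigma) \le 2^{-(m+1)}$: a single $3$-cycle forces all non-fixed orbits to have size three, giving $P(\sigma) \le 2^{-2m}$; a double transposition $(a,b)(c,d)$ gives $M(\sigma) = m + \binom{n-4}{r-1} + \binom{n-4}{r-3} \ge m+1$ because $n-4 \ge r-1$ whenever $n \ge 2r+2$; and all larger or more numerous cycles only drive $M(\sigma)$ higher. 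Feeding this back, $\sum_{\sigma \ne e} P(\sigma) \le \binom{n}{2}\,2^{-m} + (n!-1-\binom{n}{2})\,2^{-(m+1)}$, which for $(n,r) = (8,3)$ evaluates to $28/2^{15} + 40291/2^{16} \approx 0.616 < 1$. The main obstacle is exactly this uniform refinement $P(\sigma) \le 2^{-(m+1)}$ for every non-transposition $\sigma$ in $S_8$; once that finite cycle-type check is dispatched, the arithmetic completes the proof.
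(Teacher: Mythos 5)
Your proposal is correct and takes essentially the same approach as the paper: color each vertex uniformly and independently from its list, bound each $P(\sigma)$ via Lemma \ref{2cycle}, and apply a union bound over $Aut(K(n,r))=S_n$, which disposes of all cases except the borderline one, where a finer cycle-type analysis is used. The only difference is bookkeeping in that borderline case: you split $S_8$ into transpositions versus the rest and use the (correct) refinement $P(\sigma)\le 2^{-(m+1)}$ for non-transpositions, e.g.\ exponent $m+\binom{n-4}{r-1}+\binom{n-4}{r-3}$ for a double transposition, whereas the paper splits permutations (for $n=7,8$) by whether all cycles have length at most $2$ and uses $2^{-m}$ and $2^{-2m}$; both refinements amount to the same routine finite check and yield the same conclusion.
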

\begin{proof} Consider the random coloring of $K(n,r)$ as described earlier.  By Lemma \ref{2cycle}, the probability that there exists a non-trivial automorphism that fixes every color class under this random coloring is at most $$\frac{|Aut((K(n,r))|}{2^m}=\frac{n!}{2^{\binom{n-2}{r-1}}}\le\frac{n!}{2^{\binom{n-2}{2}}}$$ since $r\ge 3.$ It is straightforward to check that the last expression is less than $1$ for $n \ge 9.$ 

Since $n \ge 2r+1$ and $r\ge3$ the remaining cases are $n=7$ and $n=8.$ In these cases we look at the corresponding expression a little closer.  We bifurcate the set of non trivial automorphisms into two categories: We say a permutation $\sigma\in S_n$ is of Category I if all the cycles in the cycle decomposition of $\sigma$ have size at most $2$, otherwise we say $\sigma$ is a category II permutation.

For $n=7$ there are $\frac{7!}{2.5!} + \frac{7!}{2^2.2!.3!} + \frac{7!}{2^3.3!}  = 231$ nontrivial permutations in Category I and $4808$ permutations in Category II. Let $E_I$ and $E_{II})$ denote the events that a nontrivial automorphism of Category I, Category II respectively, fixes every color class, then
$$P(E) = P(E_I) + P(E_{II}) < \frac{231}{2^{10}} + \frac{4808}{2^{20}} < 1.$$ 
Similarly when $n=8$ $$P(E) < \frac{973}{2^{15}} + \frac{39346}{2^{30}}<1$$ and this completes the proof.  
\end{proof}

\section{Concluding Remarks}
\begin{itemize}
\item The lone case of $r=2, n=5$ has not been considered in the preceding sections. In this case, in fact, $D(K(5,2))=3$. It is a simple calculation (again using a randomized coloring) to show that in this case too, $D_l(K(5,2))=3$. We omit the (simple) details.
\item While we were content with showing that the with positive probability, a random list-coloring of the vertices of $K(n,r)$ (for $r\ge 3$) actually is distinguishing, it is easy to see that in fact, these are asymptotically almost sure events, like in the case of $r=2$. In particular, these give very efficient randomized algorithms for distinguishing list colorings.
\item Our methods may possibly also extend to yield other results of the same kind. An instructive instance would be to consider an $r$-fold cartesian product of complete graphs; the distinguishing number of  cartesian products of complete graphs was shown to be $2$ in \cite{KX} though it is not yet known if the list distinguishing number also equals $2$, and we believe that the same ideas may turn out to be useful there (though the computations can get more complicated).
\item As observed in remark \ref{probzero}, the expressions for the probabilities as calculated in most sections are non-zero only if certain lists are identical, otherwise the probabilities are in fact much lower. We believe that the following strengthening of the List Distinguishing Conjecture is also true:
\begin{conj} For a graph $G$, with a collection of equal sized (size $k$, say) lists $\List=\{L_v|v\in V\}$, if $p(\List)$ denotes the probability that a random coloring (obtained by choosing for each vertex, a color from its list uniformly and independently) admits a non-trivial automorphism which preserves all the color classes, then $p(\List)$ is maximized when the lists are identical.
\end{conj}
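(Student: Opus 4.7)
The plan is to analyze, for each non-trivial $\sigma \in Aut(G)$, the event $A_\sigma$ that $\sigma$ preserves every color class of the random coloring, to establish that $\Pr[A_\sigma]$ is pointwise maximized when all lists are identical, and then to attempt to lift this pointwise fact to the union $p(\List) = \Pr\bigl[\bigcup_{\sigma \ne id} A_\sigma\bigr]$.

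First, $A_\sigma$ occurs iff the random coloring $f$ is constant on each orbit of $\langle \sigma \rangle$. Since the orbits are vertex-disjoint and the colors assigned to distinct vertices are independent,
$$\Pr[A_\sigma] = \prod_O \frac{\bigl|\bigcap_{v \in O} L(v)\bigr|}{k^{|O|}} \le \prod_O \frac{k}{k^{|O|}},$$
with equality iff the lists on every orbit $O$ of $\sigma$ coincide. Hence for $\List^* = \{L^*\}_{v \in V}$ consisting of identical lists, $\Pr[A_\sigma]$ attains its maximum value simultaneously for every $\sigma$. The same reasoning extends to any subgroup $H \le Aut(G)$: the event $B_H$ that $f$ is constant on the orbits of $H$ factorizes over those orbits, and so $\Pr[B_H]$ is likewise maximal at $\List^*$.

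The obstacle is that pointwise maximality of each $\Pr[A_\sigma]$ does not force maximality of the union. The inclusion-exclusion expansion $p(\List) = \sum_{\emptyset \ne S} (-1)^{|S|+1} \Pr[B_{\langle S \rangle}]$ has every summand maximal at $\List^*$, but the alternating signs prevent a direct conclusion. The two routes I would pursue are a coupling and a local modification. The coupling approach would construct a joint distribution $(f, g)$ with $f$ drawn from $\List$ and $g$ from $\List^*$ such that $\{f \text{ non-distinguishing}\} \subseteq \{g \text{ non-distinguishing}\}$ almost surely. A naive candidate drives $f$ and $g$ by a shared uniform index $U_v \in \{1,\dots,k\}$ via bijections $L(v) \leftrightarrow \{1,\dots,k\}$ and $L^* \leftrightarrow \{1,\dots,k\}$; unfortunately, equal $f$-values across two vertices need not correspond to equal $g$-values once the lists differ, so the nesting fails. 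The local modification route would describe a single-list swap (e.g.\ exchanging $c \in L(u) \setminus L(v)$ with $c' \in L(v) \setminus L(u)$, strictly increasing $|L(u) \cap L(v)|$) transforming any $\List$ into identical lists, and argue that each swap does not decrease $p(\List)$.

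The hard part, and the reason this remains a conjecture, is reconciling the pointwise monotonicity at the level of individual $A_\sigma$ with the correlated, highly non-linear behavior of the union: different $A_\sigma$'s interact through their shared vertex orbits and can produce counter-monotone cancellations in the inclusion-exclusion sum. As a bookkeeping device I would invoke the subgroup lattice of $Aut(G)$ and Möbius inversion, writing $\Pr[\mathrm{Stab}(f) = \{id\}] = \sum_{H \le Aut(G)} \mu(\{id\}, H)\, \Pr[B_H]$, and then try to pair terms of opposite Möbius sign so that the uniform inequalities $\Pr[B_H] \le \Pr_{\List^*}[B_H]$ propagate correctly. Finding such a pairing, or any alternative mechanism that preserves monotonicity under the union operation, is the step where I expect a genuinely new idea to be needed.
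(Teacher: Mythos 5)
The statement you have been asked to prove appears in the paper only as a \emph{conjecture}: the authors explicitly do not prove it, remarking instead that their computations establish the weaker fact that the \emph{expected number} of non-trivial color-class-preserving automorphisms, i.e.\ the sum $\sum_{\sigma\neq id}\Pr[A_\sigma]$, is maximized when all lists are identical. Your orbit-by-orbit computation $\Pr[A_\sigma]=\prod_O \bigl|\bigcap_{v\in O}L(v)\bigr|/k^{|O|}\le\prod_O k^{1-|O|}$, with equality exactly when the lists agree on every orbit, is correct and recovers precisely this termwise (hence expectation-level) maximality; up to that point you have reproduced what the paper actually establishes, by essentially the same reasoning.

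The genuine gap is the one you yourself flag: passing from termwise maximality of the $\Pr[A_\sigma]$ (or of the $\Pr[B_H]$ over subgroups $H$) to maximality of the union probability $p(\List)=\Pr\bigl[\bigcup_{\sigma\neq id}A_\sigma\bigr]$. Neither of your two proposed routes is carried out: the shared-uniform-index coupling fails for the reason you give (equality of $f$-values on an orbit need not transfer to equality of $g$-values once the lists differ, so the non-distinguishing events do not nest), and the M\"obius/inclusion--exclusion pairing is left as an unrealized hope with no mechanism offered to control the signed cancellations. So what you have is a correct partial analysis together with an accurate diagnosis of the obstruction, not a proof; as far as this paper is concerned the statement remains open, and closing it would indeed require the new idea you identify.
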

Our results, while not quite proving this stronger statement exactly (since computing these probabilities exactly would be cumbersome) in fact proves that the expected number of non-trivial automorphisms that fix all the color classes is actually maximized when the lists are identical. 
\end{itemize}

\section*{Appendix: $D_l(K(n,2))=2$ for $n=6,7$}
Consider a graph $G$ with a collection of lists $\List=\{L(e)|e\in E(G)\}$.   By  the \textbf{color palette of a vertex $v$} in a graph $G$, we mean the multi-set of colors assigned to the incident edges of $v$ in a list coloring of the edges of $G$. A monochromatic path $P$ shall refer to a maximum sized path in $G$ such that $\mathop{\cap}\limits_{e\in E(P)} L(e) \neq \emptyset$. We use $l(P)$ to denote the length of $P$ and $|P|$ to denote the number of vertices in $P$. Hereafter the word path shall also refer only to monochromatic paths.

%The following proposition shows how to do a two list distinguishing coloring of the edges of $K_n$, $n \ge 6$ when the maximum length of $P$ is one, that is, the given lists on the edges incident to same vertex have no common color.     
\begin{lem}\label{random} Let $n\ge 6$ Suppose we have a collection of lists $\List=\{L(e)|e\in E(K_n\}$ of size $2$. If there is no monochrome path in $K_n$ of length two then there is a distinguishing list coloring of the edges of $K_n$ from the lists in $\List$.
\end{lem}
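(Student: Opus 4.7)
The plan is to use the probabilistic approach of Sections~\ref{r=2} and~\ref{r>2}: color each edge $e$ of $K_n$ uniformly and independently from its list $L(e)$, and bound the expected number of nontrivial $\sigma\in S_n = Aut(K_n)$ that fix every color class. If this expectation is strictly less than $1$, some list coloring admits no nontrivial color-class-preserving automorphism and is therefore distinguishing.

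The key point is a strengthening of Remark~\ref{probzero} made possible by the hypothesis. Suppose a nontrivial $\sigma$ fixes every color class. Then for each edge $e$ the entire $\sigma$-orbit of $e$ must be monochromatic, so any two edges in a single orbit that share a vertex form a $2$-edge path whose edge lists share a common color --- contradicting the hypothesis. Hence only permutations $\sigma$ for which every edge orbit consists of pairwise vertex-disjoint edges can possibly contribute. A quick structural check forces such a $\sigma$ to be a fixed-point-free involution: any cycle $(a_1\,a_2\,\dots\,a_k)$ of length $k \ge 3$ places the adjacent edges $(a_1,a_2)$ and $(a_2,a_3)$ in a single orbit, and any transposition $(a,b)$ together with a fixed point $c$ places $(a,c)$ and $(b,c)$ in a single orbit. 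Both are excluded.

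For $n$ odd (in particular $n = 7$) there is no fixed-point-free involution, so the expected count is already zero and the conclusion is immediate. For $n$ even, the number of fixed-point-free involutions in $S_n$ is $n!/(2^{n/2}(n/2)!)$, and for each such $\sigma$ the edges of $K_n$ partition into $n/2$ singleton orbits (one inside each transposition) together with $2\binom{n/2}{2} = n(n-2)/4$ orbits of size $2$: for each pair of transpositions $(a,b), (c,d)$ the four connecting edges split as $\{(a,c),(b,d)\}$ and $\{(a,d),(b,c)\}$, and in each such orbit the two edges are vertex-disjoint. Each size-$2$ orbit independently contributes a factor of at most $1/2$ to $P(\sigma)$, so $P(\sigma) \le 2^{-n(n-2)/4}$ and the expected count is at most
$$\frac{n!}{2^{n/2}(n/2)!}\cdot 2^{-n(n-2)/4}.$$
For $n = 6$ this is $15/64 < 1$, and the expression decays super-exponentially in $n$, so it is $<1$ for every even $n\ge 6$. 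The only substantive step is the structural reduction to fixed-point-free involutions enforced by the hypothesis; the orbit count and the final numerical verification are routine.
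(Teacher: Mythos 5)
Your proof is correct, but it takes a genuinely different route from the paper. The paper's own argument never classifies the automorphisms: it colors the edges randomly and shows that with positive probability all vertex color palettes are pairwise distinct, using the hypothesis only through the fact that the edges incident to a fixed vertex have pairwise disjoint lists (so a given color can appear on at most one edge at each vertex); since a color-class-preserving automorphism must send each vertex to a vertex with the same palette, distinct palettes kill all nontrivial automorphisms at once, and the union bound $\binom{n}{2}/2^{n-2}<1$ for $n\ge 6$ finishes. You instead run a first-moment count over $\sigma\in S_n$, using the hypothesis to show $P(\sigma)=0$ unless $\sigma$ is a fixed-point-free involution (any longer cycle, or a transposition next to a fixed point, puts two adjacent edges in one orbit, forcing a common color in two intersecting lists), and then bound the expected number of surviving involutions by $\frac{n!}{2^{n/2}(n/2)!}2^{-n(n-2)/4}$. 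Your orbit count $\frac{n(n-2)}{4}$ and the value $15/64$ at $n=6$ are right, and your structural reduction is sound; it also gives the stronger conclusion that for odd $n$ (e.g.\ $n=7$) \emph{every} list coloring satisfying the hypothesis is distinguishing. The only soft spot is the phrase ``decays super-exponentially'': to make the claim airtight for all even $n\ge 6$ you should note, e.g., that passing from $n$ to $n+2$ multiplies the bound by $(n+1)2^{-n}<1$, so the terms decrease from $15/64$ onward. In exchange for this extra bookkeeping over cycle types, your argument stays entirely within the expectation framework of Lemma \ref{lemcore}, whereas the paper's palette argument is shorter and uniform in $n$.
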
  
\begin{proof}
For each edge $e \in E(K_n)$,  choose a random coloring of the edges as before.  Observe that for any color fixing automorphism $\sigma$, the color pallettes of $u$ and $\sigma(u)$ are the same. More over the edges $uv$ and $\sigma(u)\sigma(v)$ have the same color. The probability that there exist vertices $u,v$ with the same color pallettes is at most $\binom{n}{2}/2^{n-2}$ since for any color incident at vertex $u$, and not on the edge $uv$, there is at most one edge incident with $v$ that can have that color in its list, by the hypothesis. Now, for $\binom{n}{2}/2^{n-2}<1$ for $n\ge 6$, so we are through.
\end{proof}
By the virtue of lemma \ref{random}, we may assume that $K_n$ contains a monochromatic path of length at least two.  We introduce some further terminology. $K_n$ shall be the complete graph on the vertex set $[n]$, and  $G'$ we shall denote the complete subgraph on $[n]\setminus V(P)$. The edges between $G'$ and $P$ will be referred to as crossing edges. $e_{ij}$ is the edge between vertex $i$ and $j$ and $c_{ij}$ shall denote the color assigned to the edge $e_{ij}.$ The available common color on the edges of $P$ is denoted $c_1.$ Without loss of generality we assume $V(P) = \{ 1, 2, \hdots, |P|\}.$

\begin{thm}\label{n=6,2}
$D_l(K(6,2))= 2$.
\end{thm}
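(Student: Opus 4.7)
The plan is to combine Lemma \ref{random} with a case analysis based on a maximum monochromatic path in $K_6$. By Lemma \ref{random}, if the list assignment $\List$ admits no monochromatic path of length two, the lemma itself already furnishes a distinguishing list coloring, so I may assume there is a monochromatic path of length at least two. Let $P$ be one of maximum length, with a common color $c_1 \in \bigcap_{e\in E(P)} L(e)$, and assign $c_1$ to every edge of $P$. Any color-preserving non-trivial automorphism $\sigma$ must preserve the $c_1$-color class, which contains $E(P)$; consequently $\sigma$ sends $V(P)$ to $V(P)$ and preserves the path graph on $V(P)$. The restriction of $\sigma$ to $V(P)$ is therefore either the identity or the reversal of $P$, possibly composed with an arbitrary permutation of $G' = [6]\setminus V(P)$, drastically cutting down the list of symmetries that must be broken.

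I would then split into four subcases according to $|P|\in\{3,4,5,6\}$. The maximality of $P$ is crucial throughout: for every vertex $x\notin V(P)$ and every endpoint $u$ of $P$, one has $\bigl(\bigcap_{e\in E(P)} L(e)\bigr)\cap L(e_{ux})=\emptyset$, which forces at least one specific color to be absent from $L(e_{ux})$ and hence gives a ``free'' non-$c_1$ choice on each crossing edge at an endpoint. In the easiest subcase $|P|=6$, only the reversal of $P$ needs to be killed, and choosing colors on a single non-$P$ edge (e.g.\ $e_{13}$ versus its reversal image $e_{46}$) so that they do not match suffices. In the subcases $|P|\in\{4,5\}$, $G'$ is a single vertex or a single edge; the crossing edges, enriched by the endpoint-maximality constraint, provide enough freedom to make the color palettes at the two endpoints of $P$ distinct (killing the reversal) and to simultaneously break the possible swap of $G'$ when $|P|=4$.

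The technically hardest subcase is $|P|=3$, where after assigning $c_1$ to the two edges of the path $1{-}2{-}3$, the set $G'$ is a triangle on $\{4,5,6\}$ and the residual symmetry group can be as large as $\langle (1\,3)\rangle \times S_3$, i.e.\ twelve automorphisms to kill. Here my plan is to first try a randomized argument of the Lemma \ref{random} flavour restricted to $G'$ combined with a generic choice on the nine crossing edges; if that argument fails it must be because two of the vertices in $G'$ have near-identical list patterns among their incident edges, and in that event the maximality of $P$ (applied at the endpoints $1$ and $3$) guarantees that many of the crossing-edge lists avoid $c_1$, permitting a direct, explicit coloring that gives some vertex of $G'$ a unique color palette and makes the palettes of $1$ and $3$ differ. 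The main obstacle is the bookkeeping for this $|P|=3$ case, namely verifying across the finite family of possible list configurations on the crossing edges and on $G'$ that the maximality constraint always leaves enough flexibility to break all twelve potential automorphisms; modulo that finite check, the rest of the proof is routine.
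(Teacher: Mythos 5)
Your overall architecture is the same as the paper's: invoke Lemma \ref{random} to assume a monochromatic path of length at least two exists, fix a maximum such path $P$, color $E(P)$ with $c_1$, and split into the cases $|P|=6,5,4,3$, using maximality to get $c_1$-free crossing edges at the endpoints. However, there are two concrete gaps. First, your key inference ``the $c_1$-class \emph{contains} $E(P)$, hence $\sigma$ preserves the path graph on $V(P)$'' is not valid as stated: it only holds if you additionally arrange that (essentially) no other edge receives $c_1$, and that requirement can collide with your symmetry-breaking requirement when lists coincide. Your own $|P|=6$ recipe shows the problem: if $L(e_{13})=L(e_{46})=\{c_1,x\}$ you cannot simultaneously avoid $c_1$ on both edges and make $c_{13}\neq c_{46}$, and if you let $e_{13}$ take $c_1$, the $c_1$-class becomes $E(P)\cup\{e_{13}\}$ (a triangle on $1,2,3$ with a tail), which is preserved by the transposition $(12)$ --- an automorphism nothing else in your scheme kills. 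This is exactly why the paper uses the pair $e_{24},e_{35}$ and allows them (and only them) to possibly take $c_1$: adding either or both of these edges to $E(P)$ creates no stabilizer beyond the reversal $(16)(25)(34)$, which is then killed by $c_{24}\neq c_{35}$. Your plan needs either that choice of edge pair or a separate argument for the degenerate-list situation, and the same caveat must be tracked in the other cases.

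Second, the case $|P|=3$, which is where the real content of this theorem lies, is not actually carried out. The proposal ``run a randomized argument on $G'$ and the nine crossing edges, and if it fails do a finite check'' is not a proof: the claimed dichotomy (failure only when two vertices of $G'$ have near-identical list patterns) is asserted, not established, and the residual-symmetry count of twelve is itself contingent on keeping the $c_1$-class equal to $E(P)$, which is delicate here because $c_1$ may still lie on the lists of $e_{13}$, of the crossing edges at the middle vertex $2$, and of the edges inside $\{4,5,6\}$. The paper closes this case deterministically with an explicit scheme: assign $c_1$ to $e_{12},e_{23}$; avoid $c_1$ on $e_{24},e_{25},e_{26},e_{13},e_{45}$; color $e_{16},e_{46}$ arbitrarily; avoid $c_{16}$ on $e_{34},e_{35},e_{36},e_{14},e_{15}$; and avoid $c_{46}$ on $e_{56}$ --- all avoidances being legitimate because maximality keeps $c_1$ off the crossing edges at the endpoints $1$ and $3$ --- and then verifies directly that any color-preserving $\sigma$ fixes $1,2,3$ and then $4,5,6$. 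Some explicit verification of this kind (or a genuinely completed probabilistic computation over all of $S_6$, which the paper's general method cannot deliver for $n=6$) is required before your outline becomes a proof.
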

 \begin{proof}
As observed before, we may assume that  if $P$ is a monochromatic path, then $|P| \ge 3$. 
Consider the following cases 
\begin{itemize}
\item[1.] $|P| =6:$
Color $E(P)$ using $c_1,$ avoid $c_1$ from all other edges except $e_{24}$ and $e_{35}$. Also ensure that $c_{24} \neq c_{35}$. This coloring is distinguishing since the color class $c_1$ is fixed (as a set) only by two maps - the identity and the permutation $\sigma=(16)(25)(34)$. But since $\sigma(e_{24}) = e_{35}$, and they are colored differently,  $\sigma$ does not fix every color class.
\item[2.]$|P|=5$: Assign $c_1$ to $E(P)$ and avoid $c_1$ from all other edges. Again, ensure that $c_{16} \neq c_{56}$; $G'$ consists of the lone vertex $6$ and $|P|=5$, so $c_1$ does not appear on the lists of  both $e_{16}$ and $e_{56}$, so this arrangement is possible.  By our choices, no crossing edge is colored $c_1$, so the monochrome set of edges colored $c_1$ is again precisely $P$. This coloring is distinguishing for very similar reasons as above. 
\item[3.]$|P|= 4$:
Assign $c_1$ to $E(P)$ and avoid $c_1$ from all other edges. Ensure that $c_{45} \neq c_{14}, c_{45} \neq c_{16}$, and $c_{45} \neq c_{46}$; again, these arrangements are possible by the maximality of $P$ as none of the crossing edges from the end vertices of $P$ contain $c_1$ in the given lists. It is now easy to check that this coloring is distinguishing.
 \item[4.] $|P|= 3$: We start by coloring the edges of $P$ using $c_1$. Color the edges $e_{16}$ and $e_{46}$ arbitrarily from their lists, and for the remaining edges, impose a restriction on the color that needs to be assigned to it as in  Table \ref{colortable6.1} below.  Again, note that the maximality of $P$ ensures that all these avoidances are permissible.
 \end{itemize}
   \begin{table}[h]
\begin{center}
\begin{tabular}{| c | c | c |}
  \hline                       
  Edges  & Restriction on the color choice\\
  \hline \hline
 $e_{12}, e_{23} $  & Assign $c_1$\\
  \hline 
   $e_{24},e_{25},e_{26},e_{13},e_{45}$ & Avoid $c_1$\\
    \hline
  %$e_{25}$ & $c_{25}$ & Not $c_1$\\
%  \hline  
%  $e_{24}$ & $c_{24}$ & Not $c_1$\\
%  \hline 
  $e_{34},e_{35},e_{36},e_{14},e_{15}$ & Avoid $c_{16}$\\
  \hline
    $e_{56}$  & Avoid $c_{46}$ \\
  \hline
\end{tabular}
\caption{Coloring Scheme when $n=6$}\label{coloring table} \label{colortable6.1}
\end{center}
\end{table}

To see why this is distinguishing, suppose $\sigma$ is an automorphism that fixes each of these color classes. By the avoidance choices, the only edges that are colored $c_1$ are the edges of $P$. Our choices also ensure that the pallettes of vertices $1$ and $3$ are different, so it follows that $\sigma $ fixes $1,2,3$. Since $c_{46}\ne c_{56}$, $\sigma\neq (45), (456),(465)$ and since $c_{14}, c_{15}\ne c_{16}$, $\sigma\neq (46), (56)$, so $\sigma$ is the identity map on $[6]$. 
\end{proof}
\begin{thm}
$D_l(K(7,2)) = 2.$
\end{thm}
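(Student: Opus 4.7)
The plan is to mirror Theorem \ref{n=6,2}. By Lemma \ref{random} we may assume $K_7$ contains a monochromatic path $P$ with common color $c_1$; set $V(P)=\{1,\ldots,|P|\}$ and $V(G')=\{|P|+1,\ldots,7\}$, and perform a case analysis on $|P|\in\{3,4,5,6,7\}$. In each case we color $E(P)$ with $c_1$ and exploit the maximality of $P$, which yields the structural fact $c_1\notin L(e_{1v})\cup L(e_{|P|v})$ for every $v\in V(G')$ (else $P$ would extend). Coloring the remaining non-$P$ edges to avoid $c_1$ (always possible since each list has size $2$) makes the $c_1$-color class equal to $E(P)$, so that any color-preserving automorphism lies in the setwise stabilizer of $E(P)$ in $S_7$, namely $\{\mathrm{id},\sigma_{|P|}\}\times S_{V(G')}$ where $\sigma_{|P|}$ reverses $P$. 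The remaining symmetries are broken by imposing constraints on other color classes.

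For $|P|\in\{6,7\}$ the stabilizer is merely $\{\mathrm{id},\sigma_{|P|}\}$, and assigning different colors to a single non-$P$, non-$\sigma_{|P|}$-fixed edge pair suffices. For $|P|=5$ the stabilizer has order four, and the three inequalities $c_{16}\ne c_{17}$, $c_{16}\ne c_{56}$, $c_{16}\ne c_{57}$---simultaneously realizable since each of the four lists has size $2$ and none contains $c_1$ by maximality---break $(67)$, $\sigma_5$, and $\sigma_5(67)$ respectively. For $|P|=4$ we distinguish the three non-$P$ vertices by assigning pairwise distinct ordered tuples $T_v=(c_{1v},c_{2v},c_{3v},c_{4v})$, no two of which are related by the full reversal $(a,b,c,d)\mapsto(d,c,b,a)$ (the action of $\sigma_4$), and with at least one $T_v$ non-palindromic (to break $\sigma_4$ alone). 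Over two colors there are ten ``safe'' tuples (four palindromes plus six reversal-pair representatives), and an SDR argument on the admissible tuples at the three vertices always yields a valid selection.

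The main obstacle is $|P|=3$, where the stabilizer has order $48$. For each $v\in V(G')=\{4,5,6,7\}$ consider the ordered triple $T_v=(c_{1v},c_{2v},c_{3v})$. A short check shows that a $\tau=(\tau_1,\tau_2)$ in the stabilizer preserves all color classes iff $T_{\tau_2(v)}=T_v$ when $\tau_1=\mathrm{id}$, and $T_{\tau_2(v)}=T_v^{\mathrm{rev}}$ when $\tau_1=(13)$, where $T^{\mathrm{rev}}$ denotes the triple with its first and third coordinates swapped. It thus suffices to choose admissible pairwise distinct triples $T_4,T_5,T_6,T_7$, no two of which are $1$-$3$ reversals of each other, with at least one $v$ satisfying $c_{1v}\ne c_{3v}$ (so that $(13)$ alone is also broken). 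Over two colors the candidate triples are the four palindromes $(a,a,a),(a,b,a),(b,a,b),(b,b,b)$ together with the two reversal-pairs $\{(a,a,b),(b,a,a)\}$ and $\{(a,b,b),(b,b,a)\}$ of non-palindromes, giving six ``safe'' triples. A careful case check on the admissible triples at each vertex, combined with a Hall/SDR argument, yields the required simultaneous assignment. The verification is elementary but intricate, which is what makes $|P|=3$ the most delicate case.
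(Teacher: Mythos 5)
Your overall strategy is the paper's: invoke Lemma \ref{random}, take a maximum monochromatic path $P$ with common color $c_1$, color $E(P)$ with $c_1$, and break the setwise stabilizer of $E(P)$ case by case; your $|P|=5,6$ arguments are fine (and arguably cleaner than the paper's). But there are two genuine gaps. First, for $|P|=7$ your recipe is not implementable in the worst case: if every list equals $\{c_1,x\}$ (identical lists, which is exactly the situation where $|P|=7$ occurs), then ``avoid $c_1$ on all non-$P$ edges'' pins every non-path edge to $x$, so no orbit pair of the reversal $\sigma_7=(17)(26)(35)$ can be given different colors, and the resulting coloring (a Hamiltonian path in $c_1$, everything else $x$) is fixed by $\sigma_7$. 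The paper avoids this precisely by \emph{exempting} a designated pair ($e_{24},e_{46}$) from the avoid-$c_1$ rule and ensuring $c_{24}\ne c_{46}$; some such deviation from ``$c_1$-class $=E(P)$'' is unavoidable here, and your argument does not provide it.

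Second, in the hardest case $|P|=3$ (and, less severely, $|P|=4$) the entire difficulty is the feasibility of your simultaneous choice of triples $T_4,\dots,T_7$ in pairwise distinct reversal-classes with one non-palindrome, and you do not prove it --- you assert ``a careful case check combined with a Hall/SDR argument'' and reason ``over two colors,'' although the lists are arbitrary. Worse, feasibility does not follow from the only structural fact you extract from maximality (that $c_1\notin L(e_{1v})\cup L(e_{3v})$): take $L(e_{1v})=L(e_{3v})=\{a,b\}$ and $L(e_{2v})=\{c_1,y\}$ for all $v\in\{4,5,6,7\}$. This satisfies your stated fact, the middle coordinate is pinned to $y$, and each vertex then admits only the three reversal-classes $\{(a,y,a)\},\{(b,y,b)\},\{(a,y,b)\sim(b,y,a)\}$, the same three at all four vertices, so four distinct classes cannot be selected and Hall's condition fails. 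Such configurations are ruled out only by maximality applied to colors \emph{other than} $c_1$ (here $5$--$1$--$4$, extended by $e_{34}$, would be a monochromatic path on four vertices in color $a$), which is exactly the additional leverage the paper's explicit avoidance tables (Tables \ref{colortable7.1} and \ref{colortable7.2}) encode and which your write-up neither states nor uses. As it stands, the proof is incomplete where the real work lies.
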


\begin{proof} 
We proceed as we did in the theorem \ref{n=6,2} and consider the following cases.
\begin{itemize}
\item[1.] When $|P| \ge 5$ the coloring scheme is similar to that of $|P| \ge 4$ in theorem \ref{n=6,2}. If $|P| = 7,$ assign $c_1$ to all the edges of $E(P),$ ensure $c_{24} \neq c_{46}$ and avoid $c_1$ from all other edges. For $|P| = 6,$ assign $c_1$ to $E(P),$ ensure $c_{24} \neq c_{35}$ and avoid $c_1$ from all other edges. For $|P| = 5,$ assign $c_1$ to $E(P),$ ensure $c_{56} \notin \{c_{16}, c_{17}, c_{57} \}$ and avoid $c_1$ from all other edges. The proofs that these give distinguishing colorings is similar to the arguments that appear in theorem \ref{n=6,2}, so we omit those details.
\item[2.] $|P| = 4:$  Assign $c_1$ to all the edges of $P$, and ensure that $c_{56}\neq c_1$. Also, avoid $c_{56}$ from $e_{67}$ and $e_{57}.$ Further ensure $c_{17} \neq c_{47}$ and  $c_{16}\ne c_{15}$ from all other edges. As always, avoid $c_1$ on any crossing edge.\\
Our choice of coloring guarantees that any automorphism $\sigma$ that fixes every color class necessarily maps the set $\{1,2,3,4\}$ and $\{5,6,7\}$ into themselves respectively. Since $c_{57}, c_{67}\ne c_{56}$, $\sigma(7)=7$ and since $c_{17} \neq c_{47}$ it follows that $\sigma$ fices each of $1,2,3,4$. Finally, since $c_{16}\ne c_{15}$, $\sigma$ fixes $5,6$ as well.
\item[3.] $|P|= 3$: Color the edges of $P$ using $c_1$. Color the edges $e_{16}$ and $e_{46}$ arbitrarily from their lists, and for the remaining edges, we consider two sub cases and in each sub case we impose a different type of restriction on the color that needs to be assigned to the edges; see  Tables \ref{colortable7.1} and \ref{colortable7.2} for the details on the restrictions.  Again, note that the maximality of $P$ ensures that all these avoidances are permissible. 
\end{itemize}
\begin{table}[h]
{Sub case 1.} $c_1 \in L(e_{27}).$ 
\begin{center}
\begin{tabular}{| c | c | c |}
  \hline                       
  Edges  & Restriction on the color choice\\
  \hline \hline
 $e_{12}, e_{23} $  & Assign $c_1$\\
  \hline 
   $e_{24},e_{25},e_{26},e_{13},e_{45},e_{27}$ & Avoid $c_1$\\
    \hline
  %$e_{25}$ & $c_{25}$ & Not $c_1$\\
%  \hline  
%  $e_{24}$ & $c_{24}$ & Not $c_1$\\
%  \hline 
  $e_{34},e_{35},e_{36},e_{14},e_{15}, e_{37}$ & Avoid $c_{16}$\\
  \hline
    $e_{47},e_{56}, e_{67}$  & Avoid $c_{46}$ \\
  \hline
 $e_{17}$  & Avoid $c_{15}$ \\
  \hline
   $e_{37}$  & Avoid $c_{36}$ \\
  \hline
\end{tabular}
\caption{Coloring Scheme when $n=7$}\label{colortable7.1}
\end{center}
\end{table}
\begin{table}[h]
{Sub case 2.} $c_1 \notin L(e_{27}).$ 
\begin{center}
\begin{tabular}{| c | c | c |}
  \hline                       
  Edges  & Restriction on the color choice\\
  \hline \hline
 $e_{12}, e_{23} $  & Assign $c_1$\\
  \hline 
   $e_{24},e_{25},e_{26},e_{13},e_{45},e_{47},e_{57},e_{67}$ & Avoid $c_1$\\
    \hline
  %$e_{25}$ & $c_{25}$ & Not $c_1$\\
%  \hline  
%  $e_{24}$ & $c_{24}$ & Not $c_1$\\
%  \hline 
  $e_{34},e_{35},e_{36},e_{14},e_{15}, e_{37}$ & Avoid $c_{16}$\\
  \hline
    $e_{56}$  & Avoid $c_{46}$ \\
  \hline
 $e_{17}$  & Avoid $c_{15}$ \\
  \hline
  $e_{27}$  & Avoid $c_{24}$ \\
  \hline
   $e_{37}$  & Avoid $c_{36}$ \\
  \hline
\end{tabular}
\caption{Coloring Scheme when $n=7$}\label{colortable7.2}
\end{center}
\end{table}
In sub case 1, Firstly we observe that by our choices, we in fact have $c_{37}\neq c_{36}$ because by the hypothesis of sub case 1, $L(e_{36})$ and $L(e_{37})$ cannot both have the color $c_{16}$, otherwise $|P|\ge 4$.  Further, the hypothesis of sub case 1 guarantees that $c_1$ is not present in the lists of $e_{47}$ and $e_{67}$, so our avoidances in fact give us that $P$ is the unique path of length $2$ colored $c_1$. Since the pallettes of $1$ and $3$ are different, it follows that any $\sigma$ that preserves all the color classes must necessarily fix $1,2,3$. Now we first show that $7$ is also fixed. Indeed, if $\sigma(6)=7$, then $\sigma(e_{36})=e_{37}$ but by choice, these are colored differently. Similarly, $\sigma(5)\neq 7$ since $c_{15}\neq c_{17}$. Now, if $\sigma(4)=7$, then $\sigma(6)=5$ as a consequence of our color avoidances. But then $c_{15}\ne c_{16}$, so this shows that $\sigma$ fixes $7$ as well. Finally, by following similar arguments as in theorem \ref{n=6,2}, it follows that $\sigma$ fixes $4,5,6$ as well, so $\sigma $ is the identity map.

In sub case 2, the crucial difference is in the color choice of $e_{27}$. The color avoidance here ensures that $\sigma(4)=7$ or $\sigma(7)=4$ is not possible since $c_{24}\ne c_{27}$. The rest of the proof is similar to sub case 1.
\end{proof}
\end{document}